\theoremstyle{plain}
\newtheorem{thm}{Theorem}
\newtheorem{lem}{Lemma}
\newtheorem{prop}{Proposition}
\newtheorem{cor}{Corollary}
\newtheorem{rmk}{Remark}
\theoremstyle{remark}
\theoremstyle{definition}
\newtheorem{ex}{Example}
\newtheorem*{ex*}{Example}
\newcommand{\N}{\mathbb{N}}
\newcommand{\R}{\mathbb{R}}
\newcommand{\C}{\mathbb{C}}
\renewcommand{\H}{\mathbb{H}}
\newcommand{\set}[1]{\left\{\,#1\,\right\}}
\newcommand{\paren}[1]{\left(#1\right)}
\newcommand{\bracket}[1]{\left[#1\right]}
\newcommand{\abs}[1]{\left|#1\right|}
\newcommand{\grad}{\nabla}
\newcommand{\F}{\mathcal F}
\newcommand{\eps}{\varepsilon}
\newcommand{\ucover}[1]{\widetilde{#1}}
\renewcommand{\phi}{\varphi}
\newcommand{\gammaT}[1]{\gamma|_{[0,#1]}}
\newcommand{\far}{\mathrm{far}}
\newcommand{\tfar}{t_{\far}}
\newcommand{\inner}[2]{\langle #1,#2\rangle}
\newcommand{\DD}{\mathcal D}
\DeclareMathOperator{\distort}{distortion}
\DeclareMathOperator{\leb}{leb}
\DeclareMathOperator{\Var}{Var}
\DeclareMathOperator{\E}{\mathbb{E}}
\newcommand{\Egamma}[1]{\E_{\gamma \sim W_x}\bracket{#1}}
\DeclarePairedDelimiter{\ceil}{\lceil}{\rceil}
\DeclarePairedDelimiter{\floor}{\lfloor}{\rfloor}
\DeclarePairedDelimiter{\norm}{\lVert}{\rVert}
\newcommand{\plabel}[2]{#2}
\begin{document}
\pagenumbering{gobble}
\pagenumbering{arabic}

\title{Reeb flows transverse to foliations}
\author{Jonathan Zung}
\address{Department of Mathematics, Princeton}
\email{jzung@math.princeton.edu}

\begin{abstract}
	Let $\F$ be a co-oriented $C^2$ foliation on a closed, oriented 3-manifold. We show that $T\F$ can be perturbed to a contact structure with Reeb flow transverse to $\F$ if and only if $\F$ does not support an invariant transverse measure. The resulting Reeb flow has no contractible orbits. This answers a question of Colin and Honda. The main technical tool in our proof is leafwise Brownian motion which we use to construct good transverse measures for $\F$; this gives a new perspective on the Eliashberg--Thurston theorem.
\end{abstract}

\maketitle

\begin{section}{Introduction}
	In their seminal work \plabel{fix:1.1}{{\cite{eliashberg_confoliations_1998}}}, Eliashberg and Thurston proved that a $C^2$ foliation of a closed, oriented 3-manifold not homeomorphic to $S^1\times S^2$ may be $C^0$ approximated by positive or negative contact structures. When the foliation is taut, the approximating contact structures are universally tight and weakly symplectically fillable. This theorem, along with its subsequent generalizations~\cite{kazez_c0_2017,bowden_approximating_2016}, serves as a bridge between contact topology and the theory of foliations. In one direction, one can export genus detection results from Gabai's theory of sutured manifolds to the world of Floer homology~\cite{gabai_foliations_1983,ozsvath_holomorphic_2004}. In the other direction, a uniqueness result of Vogel for the approximating contact structure implies that invariants of the approximating contact structure become invariants of the deformation class of the foliation~\cite{vogel_uniqueness_2016, bowden_contact_2016}.

Colin and Honda asked when the approximating contact structure can be chosen so that its Reeb flow is transverse to the foliation. When this is the case, the foliation can be used to control the Reeb dynamics. In particular, a transverse Reeb flow can have no contractible Reeb orbits. A contact form with this property is called \emph{hypertight}. The hypertight condition is useful for defining and computing pseudo-holomorphic curve invariants. A motivating example for us is cylindrical contact homology, an invariant of contact structures that is well defined when the contact structure supports at least one hypertight contact form~\cite{bao_definition_2018, hutchings_s1s1-equivariant_2022, hutchings_cylindrical_2017}. 

Colin and Honda constructed such transverse Reeb flows for sutured hierarchies~\cite{colin_constructions_2005}. They recently extended their methods to finite depth foliations on closed 3-manifolds~\cite{colin_foliations_2018}. In this setting, although the Reeb flow cannot be made transverse to the closed leaf, it is transverse to a related essential lamination and hence has no contractible orbits. The goal of the present paper is to give a complete answer to the existence question for transverse Reeb flows for all $C^2$ foliations.

A closed leaf is an obstruction to a transverse Reeb flow. Suppose that $\Sigma$ is a closed, oriented surface and $\alpha$ is the 1-form representing a contact structure $\xi$. By Stokes' theorem, $\int_\Sigma d\alpha = 0$ and it follows that $d\alpha=0$ at some point on $\Sigma$. The Reeb flow must be tangent to $\Sigma$ at this point. In particular, this implies that foliations with transverse Reeb flows have no compact leaves, and hence are taut.

More generally, an invariant transverse measure for $\F$ is an obstruction to a transverse Reeb flow. We prove that for $C^2$ foliations, this is the only obstruction.
\begin{thm}\label{thm:main}
	Let $\F$ be a co-orientable $C^2$ foliation on a closed, oriented 3-manifold $M$. Then $T\F$ can be perturbed to a contact structure with Reeb flow transverse to $\F$ if and only if $\F$ does not support an invariant transverse measure.
\end{thm}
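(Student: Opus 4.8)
\emph{The only if direction} is the obstruction indicated in the introduction, made precise. An invariant transverse measure produces a nonzero closed $2$-current $C$ tangent to $\F$. If $\alpha$ were a contact form with $\ker\alpha$ $C^0$-close to $T\F$ and Reeb field $R_\alpha$ transverse to $\F$, then $\ker d\alpha = \langle R_\alpha\rangle$ is transverse to $T\F$, so $d\alpha|_{T\F}$ is nondegenerate and hence a leafwise area form of constant sign; therefore $\int_C d\alpha \neq 0$, contradicting $\int_C d\alpha = \int_{\partial C}\alpha = 0$. The main content is the converse.

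\emph{The if direction.} Write $\F = \ker\beta$ and look for a contact form $\alpha = \beta + \eta$ with $\eta$ small in $C^0$, so $\ker\alpha$ is automatically $C^0$-close to $T\F$. Because $d\beta|_{T\F} = 0$ for any defining $1$-form of an integrable plane field, it suffices to arrange $d\eta|_{T\F} > 0$: then $d\alpha|_{T\F} > 0$, so $d\alpha$ has rank $2$ with kernel --- the Reeb direction --- transverse to $\F$, and a short estimate (using that $\eta$ is small, so $\alpha$ still evaluates positively on that kernel) gives $\alpha \wedge d\alpha > 0$. The difficulty is real: the kernel of $d\beta$ lies \emph{inside} $T\F$, so the perturbation $d\eta$ must push the Reeb direction out of $T\F$ and onto the same side of it \emph{everywhere at once}, all while $\eta$ remains $C^0$-small --- which forces $\eta$ to oscillate in the transverse direction. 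This is strictly more than the Eliashberg--Thurston perturbation, whose Reeb field degenerates into $T\F$ and generically crosses the leaves; and the obstruction to a globally consistent choice of side is precisely an invariant transverse measure, by the current computation above.

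\emph{Building $\eta$ from leafwise Brownian motion} is the technical heart of the argument, and the promised new perspective on Eliashberg--Thurston. Fix an auxiliary metric, let $W_x$ be the Wiener measure of Brownian motion on the leaf through $x$, and consider the associated foliated heat flow; by Garnett's theorem harmonic (heat-stationary) measures always exist. Starting from a tiny Eliashberg--Thurston-style bump modeled on a local piece of $\F$ carrying some contracting holonomy, spread it over a large leafwise region by averaging along Brownian paths run for a long time $T$ and renormalizing --- schematically, through quantities built from $\Egamma{\,\cdot\,}$. Spreading the bump over a leafwise ball of radius $\sim\sqrt T$ makes the averaged $1$-form $C^0$-small, while its leafwise exterior derivative is a spatial average of the bump's derivative whose positivity persists exactly when the holonomy contracts on Brownian average. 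The hypothesis that $\F$ carries no invariant transverse measure is what forces this contraction --- an invariant transverse measure would be a Brownian-stationary transverse object making the average cancel --- via a contraction dichotomy in the spirit of Deroin--Kleptsyn; a minimal-set analysis covers the non-minimal case. The same construction reproves the Eliashberg--Thurston $C^0$-approximation of a taut foliation by a contact structure.

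\emph{Hypertightness} is then automatic: Reeb orbits of $\alpha$ are closed curves positively transverse to $\F$, and $\F$, having no invariant transverse measure, is taut, so its lift $\hatF$ to the universal cover has leaf space a simply connected $1$-manifold, which admits no monotone closed loop; hence $\alpha$ has no contractible Reeb orbit. The principal obstacle I anticipate is the Brownian construction: establishing the contraction with enough uniformity to keep $d\eta|_{T\F}$ bounded below while $\norm{\eta}_{C^0} \to 0$, globalizing the bump over all of $M$ (handling harmonic measures of less than full support and the regularity of long-time averages), and reconciling the transverse oscillations with the mere $C^1$ regularity of $T\F$.
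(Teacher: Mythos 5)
Your ``only if'' direction is correct and is essentially the paper's argument (an invariant transverse measure yields a nonzero closed positive $2$-current tangent to $\F$; transversality of the Reeb field forces $d\alpha|_{T\F}$ to be a nowhere-zero leafwise area form of one sign, contradicting Stokes). Your reduction of the ``if'' direction is also correct and matches the paper's final step: it suffices to produce a $C^0$-small $1$-form $\eta$ with $d\eta|_{T\F}>0$, since then $\ker d\alpha$ is transverse to $\F$ and $\alpha\wedge d\alpha=\alpha(R)\,d\alpha|_{T\F}>0$.

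The gap is in the construction of $\eta$. ``Spreading an Eliashberg--Thurston-style bump by Brownian averaging'' does not obviously work: heat-flow averaging of a $1$-form does not interact with $d$ in a way that propagates pointwise positivity of $d\eta|_{T\F}$ from a local contact region to all of $M$, and this bump-and-propagate scheme is exactly the strategy the paper is departing from. The missing idea is to diffuse not a $1$-form but the \emph{logarithm of a transverse measure}: starting from any smooth $\tau$, one replaces it by $\exp\bigl(\Egamma{\log\abs{h'_{\gammaT T}}_\tau}\bigr)\tau|_x$ (with a cutoff to preserve transverse regularity), so that the leafwise Radon--Nikodym derivative $f$ of the new measure satisfies $\Delta\log f=\Egamma{\Delta\log f_\tau(\gamma(T))}$, which for large $T$ converges uniformly to $\sum_i p_i\int_{\mathcal M_i}\Delta\log f_\tau\,d\mu_i$. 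The quantitative input you are missing is the proof that each integral $\int_{\mathcal M_i}\Delta\log f_\tau\,d\mu_i$ is strictly negative: this requires upgrading Deroin--Kleptsyn's macroscopic contraction to an infinitesimal bound $\abs{h'_{\gamma|_{[0,t]}}}\leq C_\gamma e^{-\kappa t}$ (a distortion estimate using $C^2$ regularity) and then a conditioning argument to convert the almost-sure exponential decay of $\log f_\tau$ along Brownian paths into a negative mean drift, which by It\^o's lemma is the integral above. Once $\log f$ is strictly superharmonic on each leaf, the correct perturbation is the explicit $\eta=\eps\star_2 d\log f$, for which $d\eta|_{T\F}=-\eps\,\Delta\log f\cdot\mathrm{dvol}_\F>0$ by construction; no oscillation in the transverse direction is needed, and the $C^0$-smallness is free from the factor $\eps$. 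Your paragraph correctly names the relevant difficulties (uniformity, non-minimality, regularity of long-time averages), but those difficulties are the theorem, and the bump-averaging mechanism you propose to meet them is not the one that resolves them.
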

	When $\F$ is taut, an invariant transverse measure gives rise to a nontrivial class in $H^1(M,\R)$. Therefore, we have the following corollary (c.f. Conjecture 1.5 from \cite{colin_constructions_2005}):
	\begin{cor}
		If $M$ is a closed, oriented 3-manifold with a co-oriented $C^2$ taut foliation and $M\ncong S^1 \times S^2$, then $M$ supports a hypertight contact structure.
	\end{cor}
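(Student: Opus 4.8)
The plan is to deduce the corollary from Theorem~\ref{thm:main} together with known constructions, splitting on the first Betti number of $M$. As noted just before the statement, for a taut foliation any invariant transverse measure $\mu$ produces a closed $2$-current --- its Ruelle--Sullivan current --- whose de Rham class pairs positively with the class of any closed $2$-form positive on $T\F$, and such a form exists precisely because $\F$ is taut; hence that class is nonzero in $H_2(M;\R)\cong H^1(M;\R)$, and $\F$ can carry an invariant transverse measure only when $b_1(M)>0$. So if $b_1(M)=0$ the given $\F$ carries no invariant transverse measure, and Theorem~\ref{thm:main} perturbs $T\F$ to a contact structure $\xi$ with Reeb flow transverse to $\F$. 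To see that $\xi$ is hypertight, observe that a closed Reeb orbit is a loop everywhere transverse to $\F$; a taut foliation has no Reeb component, so Novikov's theorem forbids null-homotopic closed transversals, and no closed Reeb orbit can be contractible.

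It then remains to handle $b_1(M)>0$. Then $H_2(M;\Z)\neq 0$, and since $M$ carries a taut foliation with $M\ncong S^1\times S^2$ it is irreducible (by Novikov's theorem); Gabai's theorem~\cite{gabai_foliations_1983} therefore supplies a smooth taut finite-depth foliation of $M$, and the Colin--Honda construction for finite-depth foliations~\cite{colin_foliations_2018} then supplies a contact form on $M$ whose Reeb flow is transverse to the associated essential lamination and hence has no contractible orbit. So $M$ is hypertight in this range by prior work. One could instead try to apply Theorem~\ref{thm:main} directly here, after modifying a finite-depth foliation --- for instance by turbulizing its compact leaves --- so as to destroy every invariant transverse measure, and then take a transverse Reeb flow for the modified foliation; the delicate point in that route would be to check simultaneously that tautness survives and that no invariant transverse measure remains, using Sacksteder's theorem to rule out measures carried by exceptional minimal sets. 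This is not needed for the corollary.

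Granting Theorem~\ref{thm:main}, the remaining work is essentially bookkeeping, and the only two steps that require care are the current-theoretic observation that a taut foliation of a rational homology sphere admits no invariant transverse measure --- immediate from a tautness $2$-form --- and the Novikov-type input that transversality to a taut foliation precludes contractible closed orbits. The one conceptual wrinkle worth flagging is that Theorem~\ref{thm:main} does not by itself reach manifolds with $b_1(M)>0$, whose natural taut foliations (finite-depth or fibered) do carry invariant transverse measures, so for those one falls back on the Colin--Honda finite-depth results and the genuinely new case of the corollary is $b_1(M)=0$.
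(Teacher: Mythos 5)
Your proposal is correct and follows the same route as the paper: for $H^1(M;\R)=0$ one notes that an invariant transverse measure of a taut foliation would give a nontrivial class in $H^1(M;\R)$ and applies \cref{thm:main}, while for $H^1(M;\R)\neq 0$ one invokes Gabai's finite-depth taut foliations together with the Colin--Honda hypertightness result from \cite{colin_foliations_2018}. The extra details you supply (Ruelle--Sullivan currents, Novikov for contractible transversals) are exactly the justifications the paper leaves implicit.
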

	\begin{proof}
		If $H^1(M,\R)=0$, then the foliation has no invariant transverse measure and \cref{thm:main} applies. Otherwise, $M$ supports a finite depth taut foliation. In this case, Theorem 3.14 of \cite{colin_foliations_2018} gives the desired hypertight contact structure.
	\end{proof}
	One should think of invariant transverse measures as exceptional. In fact, under some conditions, it follows from a result of Bonatti--Firmo that non-existence of invariant transverse measures is generic for $C^\infty$ foliations.
	\begin{cor}\label{cor:generic}
		Suppose that $M$ a closed, oriented, atoroidal 3-manifold. Then any $C^\infty$ taut foliation on $M$ is $C^0$ close to a hypertight contact structure.
	\end{cor}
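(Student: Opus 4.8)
The plan is to deduce \cref{cor:generic} from \cref{thm:main} by first perturbing $\F$ so as to destroy any invariant transverse measure. If $\F$ already supports no invariant transverse measure there is nothing to do: \cref{thm:main} produces a contact structure $C^0$-close to $T\F$ whose Reeb flow is transverse to $\F$, and, as explained in the introduction, such a contact structure is hypertight. So assume $\F$ supports an invariant transverse measure.

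First I would analyze the compact leaves of $\F$. A compact leaf $L$ carries a closed transversal, and since that transversal crosses $L$ with constant sign, $L$ does not bound a ball; by Novikov's theorem $L$ is $\pi_1$-injective; since $M \ncong S^1\times S^2$, Reeb stability excludes a sphere leaf; and since $M$ is atoroidal, $L$ cannot be a torus. Hence either $\F$ has no compact leaf, or every compact leaf has genus $\geq 2$. In the first case I would invoke the result of Bonatti--Firmo quoted above: a $C^\infty$ foliation of a closed $3$-manifold with no compact leaf is $C^\infty$-approximated by foliations with no invariant transverse measure. Choose such an approximation $\F'$, close enough that $T\F'$ is $C^0$-close to $T\F$; then $\F'$ is co-orientable, and it is still taut because, by the Rummler--Sullivan criterion, tautness of a $C^2$ foliation is equivalent to the existence of a closed $2$-form positive on the tangent plane field, and positivity on an oriented plane field is a $C^0$-open condition. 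Applying \cref{thm:main} to $\F'$ yields a hypertight contact structure $C^0$-close to $T\F'$, hence $C^0$-close to $T\F$, and we are done.

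The remaining case, in which $\F$ has a compact leaf of genus $\geq 2$, is where I expect the main difficulty: it genuinely occurs on atoroidal manifolds --- the fibration foliation of a surface bundle over $S^1$ with pseudo-Anosov monodromy is an example --- and there \cref{thm:main} does not apply, a compact leaf being an obstruction to a transverse Reeb flow. Here a compact leaf $L$ satisfies $[L]\neq 0$ in $H_2(M;\R)$, so $H^1(M;\R)\neq 0$; one would like to spin $\F$ along its compact leaves to obtain a finite-depth taut foliation that is still $C^0$-close to $\F$ and then apply Theorem 3.14 of \cite{colin_foliations_2018}. The technical heart is then twofold: to check that the Colin--Honda construction yields a contact structure $C^0$-close to the (spun) foliation, and to carry out the spinning coherently when the set of compact leaves is not finite. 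Alternatively one may try to realize a smoothing of the suspension pseudo-Anosov flow directly as a Reeb flow with contact plane field $C^0$-close to $T\F$; its closed orbits all wind around the base circle, hence are homotopically essential, which would give hypertightness. Combined with the Bonatti--Firmo perturbation of the first case, either route establishes the corollary.
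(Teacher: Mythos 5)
Your overall strategy --- perturb $\F$ to destroy the invariant transverse measure, apply \cref{thm:main}, and fall back on Colin--Honda for the fibration-like case --- is the same as the paper's, which packages the perturbation step as a lemma of Bowden (Lemma 7.2 of \cite{zhang_monopole_2016}). However, your case analysis contains a genuine error and leaves the hard case unresolved. You invoke Bonatti--Firmo in the wrong sub-case: their theorem is that compact leaves of genus $\geq 2$ of a $C^\infty$ foliation can be perturbed away; it is not a statement about foliations \emph{without} compact leaves, and the assertion you attribute to them is not what they prove. In your sub-case ``invariant measure but no compact leaf,'' the structure theory for $C^2$ foliations (Sacksteder) forces the measure to have full support, and Tischler's theorem then says $\F$ is a deformation of a fibration over $S^1$ --- so you land precisely in the fibration case, not in a situation where the measure can be perturbed away and \cref{thm:main} applied. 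Conversely, the sub-case where Bonatti--Firmo \emph{is} the right tool is the one where the measure is supported on compact leaves of genus $\geq 2$: perturbing those leaves away is how one reaches the no-invariant-measure alternative. This is exactly the dichotomy in the paper's \cref{lem:perturb}: after a $C^0$-small perturbation, either there is no invariant transverse measure, or $(M,\F')$ is a surface bundle over $S^1$ foliated by fibers.

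Second, the fibration case is where your argument stops being a proof. The paper does not spin along compact leaves or appeal to Theorem 3.14 of \cite{colin_foliations_2018} (finite depth); it cites Theorem 3.17 of \cite{colin_foliations_2018} directly: for a fibration with fiber genus $\geq 2$ and pseudo-Anosov monodromy (both forced by atoroidality), the unique contact perturbation of the fibration is hypertight, uniqueness being Vogel's theorem, which also guarantees that any $C^0$-close contact approximation is isotopic to it. Your two alternative routes (spinning to a finite-depth foliation, or realizing a smoothing of the suspension flow as a Reeb flow with contact planes $C^0$-close to $T\F$) are plausible in outline, but you explicitly leave their ``technical heart'' unaddressed, so as written the corollary is not established in this case.
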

	\begin{cor}\label{cor:cylindrical}
		Cylindrical contact homology is an invariant of the taut deformation class of $C^\infty$ taut foliations on closed, oriented, atoroidal 3-manifolds.
	\end{cor}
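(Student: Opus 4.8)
The plan is to package \cref{cor:generic}, which supplies a hypertight contact structure $C^0$-close to any $C^\infty$ taut foliation on an atoroidal $M$, together with Vogel's uniqueness theorem for the approximating contact structure, and then to invoke that cylindrical contact homology depends only on the contactomorphism type of a hypertight contact structure. Concretely, I would use two external inputs. From \cite{bao_definition_2018}: if a closed contact $3$-manifold $(M,\xi)$ admits a hypertight contact form, then its cylindrical contact homology $CH_*(\xi)$ is defined, is independent of the auxiliary data, and is an invariant of the contactomorphism type among hypertight contact structures. From \cite{vogel_uniqueness_2016}: if $\F$ is a taut $C^2$ foliation on a closed oriented $3$-manifold $M\ncong S^1\times S^2$, there is a $C^0$-neighborhood $\mathcal U(\F)$ of $T\F$ in the space of $2$-plane fields such that any two positive contact structures contained in $\mathcal U(\F)$ are isotopic. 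Since here $M$ is atoroidal and carries a taut foliation, $M\ncong S^1\times S^2$, so Vogel's theorem applies.

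With these in hand I would first define the invariant. Given a $C^\infty$ taut foliation $\F$ on the atoroidal manifold $M$, \cref{cor:generic} (whose proof perturbs $\F$ to destroy any invariant transverse measure and then applies \cref{thm:main}) produces a hypertight contact structure $\xi$ that is $C^0$-close to $T\F$; shrinking the approximation, we may assume $\xi\in\mathcal U(\F)$. Set $CH_*(\F):=CH_*(\xi)$. If $\xi'$ is another hypertight approximation of $\F$ lying in $\mathcal U(\F)$, then $\xi$ and $\xi'$ are isotopic by Vogel's theorem, hence contactomorphic, hence have isomorphic cylindrical contact homology; so $CH_*(\F)$ is well defined.

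Finally I would establish deformation invariance. Let $(\F_t)_{t\in[0,1]}$ be a continuous path of $C^\infty$ taut foliations, so that $t\mapsto T\F_t$ is $C^0$-continuous. For each $t$, Vogel's theorem furnishes a neighborhood $\mathcal U(\F_t)$; by a standard compactness argument there is a partition $0=t_0<\dots<t_n=1$ and, for each $i$, a hypertight approximation $\xi_i$ of $\F_{t_i}$ obtained from \cref{cor:generic}, chosen $C^0$-close enough to $T\F_{t_i}$ that both $\xi_i$ and $\xi_{i+1}$ lie in $\mathcal U(\F_{t_i})$. Then $\xi_i$ and $\xi_{i+1}$ are isotopic for each $i$, so $\xi_0$ and $\xi_n$ are contactomorphic and $CH_*(\F_0)\cong CH_*(\F_1)$; thus $CH_*$ factors through the taut deformation class. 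I expect the real content of the corollary to sit entirely in \cref{cor:generic}/\cref{thm:main} and in the cited results of \cite{bao_definition_2018} and \cite{vogel_uniqueness_2016}; the only point internal to this argument that needs care is matching these inputs, namely that the hypertight contact structure from \cref{cor:generic}, built by perturbing $\F$ and applying \cref{thm:main}, can simultaneously be taken $C^0$-close enough to the \emph{original} $T\F$ to land in Vogel's uniqueness neighborhood — which holds because that approximation can be made arbitrarily $C^0$-small.
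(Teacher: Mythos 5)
Your proposal is correct and follows essentially the same route as the paper: hypertightness of the approximating contact structure comes from \cref{cor:generic}, well-definedness of cylindrical contact homology for hypertight structures comes from \cite{bao_definition_2018}, and uniqueness of the approximation comes from Vogel. The only difference is that you re-derive deformation invariance by chaining Vogel's uniqueness neighborhoods along the path of foliations, whereas the paper simply cites \cite[Theorem 9.3]{vogel_uniqueness_2016}, which states directly that taut foliations homotopic through taut foliations have isotopic contact approximations.
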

	We defer explanations of the last two corollaries to \cref{sec:generic}.

	Our method contrasts with prior constructions of contact approximations. The strategy of Eliashberg and Thurston begins with identifying some closed curves in leaves of $\F$ with attracting holonomy. They produce a contact perturbation in the neighbourhood of these curves, and then ``flow'' the contactness to the rest of the 3-manifold. It is not clear how to control the Reeb vector field during this flow operation. In the case of sutured hierarchies, Colin and Honda give an explicit inductive construction of the contact perturbation. At each step of the sutured hierarchy, they can ensure that the Reeb flow has a good standard form near the boundary compatible with the sutures.

	Our construction begins with an arbitrary transverse measure for $\F$ and smooths it out by \emph{logarithmic diffusion}. This process is similar to the leafwise heat flow studied by Garnett~\cite{garnett_foliations_1983}. The advantage of our diffusion process is that we can show that in finite time, the transverse measure becomes \emph{log superharmonic}. Roughly speaking, this means that the transverse distance between two nearby leaves is the exponential of a superharmonic function. This gives a global picture of the holonomy of $\F$. Finally, we show that there is a canonical way to deform a log superharmonic transverse measure into a contact structure. Since this perturbation is done in one shot, we have full control over the Reeb flow.

\end{section}

\begin{section}*{Acknowledgements}
The author would like to thank Peter Ozsv\'{a}th for his encouragement and guidance during this project. The author also benefited from helpful conversations with several mathematicians, including Jonathan Bowden, Vincent Colin, Sergio Fenley, David Gabai, and Rachel Roberts. Thanks to Charles Fefferman for explaining the technique used in \cref{prop:perturbkernel}. Finally, the author is grateful to the anonymous referee for a careful reading and many helpful suggestions. This work was partially supported by NSF grant DMS-1607374.
\end{section}

\begin{section}{Examples}
	In this section, we give some concrete examples of foliations and how one may deform them to contact structures. The reader should keep these examples in mind while reading the rest of the paper. We expect that this section can be read with minimal background, but the reader may wish to review \cref{sec:prel1,sec:prel4} for definitions of foliations and contact structures, and \cref{sec:prel2,sec:prel3} for a discussion of holonomy. 
	
	\begin{ex}[creating contact regions using holonomy]\label{ex:1}
		Consider $\R^3$ foliated by planes $z=\text{const}$ and endowed with the Riemannian metric $g=dx^2+dy^2+2^{-2x} dz^2$. This is a local model for a foliation with holonomy. One may produce a contact structure by rotating the tangent planes to the foliation by a Riemannian angle of $\varepsilon$ around an axis parallel to the $x$ axis. Any line parallel to the $x$ axis is a Legendrian. Travelling along such a Legendrian in the positive $x$ direction, the Riemannian angles are constant, so the Euclidean angles are increasing. This twisting of contact planes along a Legendrian is the hallmark of a contact structure.

		In equations, the contact form is $\alpha:=2^{-x}dz+\varepsilon dy$ and
		\begin{align*}
			d\alpha = -2^{-x}dx\wedge dz
		\end{align*}
		In this case, the Reeb flow is not transverse to the foliation but points in the $y$ direction. This construction may equally well be done in the quotient of $\R^3$ by the isometry $(x,y,z)\mapsto (x+1,y, 2z)$. Here we have seen the general rule that fix:``Legendrians (in the characteristic foliation on a leaf of the foliation) flow in the direction of contracting holonomy''.

		\plabel{fix:4.1}{This kind of construction is visualized in \cref{fig:examples}.} Three boxes are shown, each equipped with both a product foliation transverse to the $z$ direction and an approximating contact structure. The top and bottom faces of each box are leaves. The red dotted lines show the characteristic foliation of the contact structure on some of the boxes' faces. Observe that in the first box, the slope of the characteristic foliation on the left face is greater than the slope on the right face. These ever increasing slopes are unsustainable in a closed manifold. The second and third boxes (corresponding with \cref{ex:1} and \cref{ex:2}) are diffeomorphic to the first, but we have introduced some contraction of the leaves of the foliation. Now the slopes on the left and right faces can be made identical, facilitating gluing up to a closed manifold.
		
	\end{ex}
	\begin{figure}[ht]
		\centering
		\includegraphics[width=\linewidth]{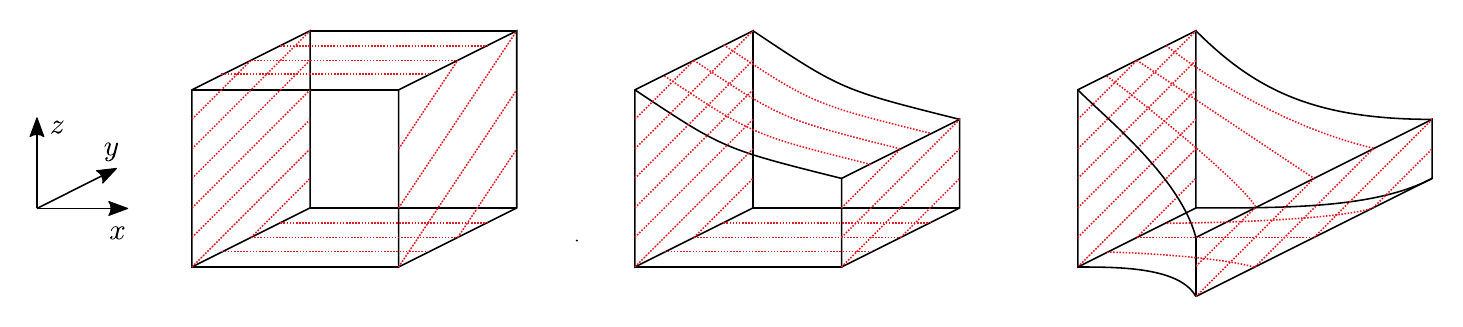}
		\caption{
		}\label{fig:examples}
\end{figure}
	\begin{ex}[Anosov flows]\label{ex:2}
		\cref{ex:1} can be modified so that the Reeb flow is transverse to the leaves. The Legendrian flow will be Anosov, i.e. spreading out in the $y$-direction as well as contracting in the $z$-direction. The relevant metric is $g=dx^2 +2^{2x} dy^2 + 2^{-2x} dz^2$ with the flow parallel to the $x$-axis.

		It is instructive to write this example in different coordinates. Let $$\H=\set{(x,y)\, \mid\, y> 0}$$ be the upper half plane. Consider the manifold $\H \times \R$, foliated by planes of the form $\H \times \set{pt}$. \plabel{fix:4.3}{Endow the manifold $\H \times \R$ with coordinates $x,y,t$}, where $x$ and $y$ correspond with the coordinates on $\H$ and $t$ parameterizes the $\R$ factor. Let $f(x,y,t)=y$.  The 1-form $f dt$ defines the horizontal foliation and is an example of a harmonic transverse measure. The flow parallel to the $y$ axis \plabel{fix:4.2.1}{oriented in the negative $y$ direction is an Anosov flow.} The 1-form $\alpha=fdt - \eps \star d\log f$ is a contact perturbation of our foliation. Indeed,
		\begin{align*}
			\alpha &= y dt + \eps \frac 1 y dx\\
			\alpha \wedge d\alpha &= \eps(2\frac 1 y dx\wedge dy \wedge dt) + O(\eps^2)
		\end{align*}
		This time, the Reeb flow is transverse to the foliation since $d\alpha$ evaluates positively on $\F$. The moral here is that ``spreading of the Legendrians'' in the characteristic foliation can contribute to contactness, and also help to make the Reeb flow transverse to the foliation.

		\plabel{fix:4.2.2}{More generally, the stable/unstable foliations of Anosov flows have no invariant transverse measures, and so are candidates for the application of \cref{thm:main}. However, these stable/unstable foliations are in general only $C^1$ so our theorem does not directly apply.}
	\end{ex}

	\begin{ex}[a coarsely harmonic transverse measure]\label{ex:3}
		Here is an example of a foliation with holonomy that one can keep in mind while reading the rest of the paper.	Let $P$ be a pair of pants with boundary components $a$,$b$, and $c$. Let $\phi$ be a diffeomorphism of $P$ exchanging $b$ and $c$. Let $M^\circ$ be the mapping torus of $\phi$. The manifold $M^\circ$ has a foliation by parallel copies of $P$. It has two toroidal boundary components, each with a horizontal foliation and one ``twice as long'' as the other. Glue these boundary components together by a map $\theta \mapsto 2\theta$, where $\theta$ parameterizes the direction transverse to $P$. Call the resulting foliated manifold $M$.

		Now let $f$ be a function on $P$ which takes the value $1$ on $b$ and $c$ and the value $2$ on $a$. We can further arrange that $f$ is invariant under $\phi$ and has only a single critical point. Then $f$ pulls back to $M^\circ$. After some smoothing near the cuffs, the 1-form $f\, d\theta$ is a smooth transverse measure on $M$. With respect to this transverse measure, the manifold has contracting holonomy along paths from $a$ to $b$ or $a$ to $c$.

		We will return to the contact perturbation in this case in \cref{ex:3revisited} in \cref{sec:farkas}.
	\end{ex}

	\begin{ex}[sutured manifold]
		The foliation in \cref{ex:3} has every leaf dense, but we will also need to consider foliations whose leaves accumulate on sublaminations. The example to keep in mind is a taut sutured manifold \plabel{fix:5.1}{\cite{gabai_foliations_1983}}. One of the simplest sutured manifolds is the solid torus $T_n$ with $2n$ longitudinal sutures, $n\geq 2$. It is foliated by a stack of monkey saddles, each of which accumulates on the boundary leaves. For a natural choice of Riemannian metric, the foliation has contracting holonomy along every path to infinity in a leaf. This foliation map be perturbed into a contact structure whose characteristic foliation on each leaf consists of radial lines emanating from a single elliptic singularity to infinity. See \cref{fig:monkey}. Notice that the curves in the characteristic foliation travel in the direction of contracting holonomy. The Reeb flow has a single periodic orbit along the core of the solid torus; every other orbit enters along a positively oriented boundary leaf and leaves along a negatively oriented one. 
		\begin{figure}[ht]
				\centering
				\includegraphics[width=0.75\linewidth]{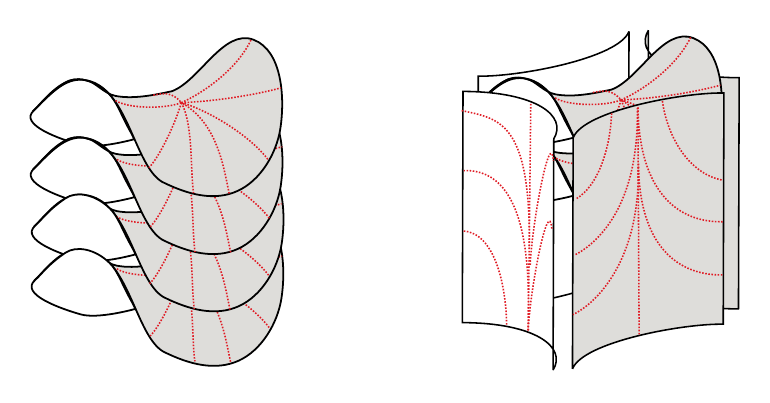}
				\caption{This is a foliation by saddles of the solid torus with four longitudinal sutures. The $S^1$ direction is vertical. The limiting annular boundary leaves are added on the right. The red dotted lines denote the characteristic foliation of an approximating contact structure.
				}\label{fig:monkey}
		\end{figure}
	\end{ex}

\end{section}

\begin{section}{Preliminaries}\label{sec:prel}
	\begin{subsection}{Foliations and their regularity}\label{sec:prel1}
		A \emph{foliation} on a 3-manifold $M$ is a decomposition of $M$ into surfaces, locally modelled on $\R^2 \times \R$. A foliation is specified by a covering of $M$ by charts such that their transition maps preserve the decomposition of $\R^3$ into horizontal planes. The surfaces in the decomposition are called leaves. A foliation is oriented if the transition maps preserve the orientations of the leaves and co-oriented if the transition maps preserve the orientation of the transverse $\R$ factor. 

		For $k\geq l$, we say that a foliation is $C^{k,l}$ if the mixed partial derivatives of the transition maps up to order $l$ in the transverse direction and up to order $k$ in total exist and are continuous. \plabel{fix:6.1}{Calegari showed that a $C^{1,0}$ foliation can be improved to a $C^{\infty,0}$ foliation by a topological isotopy \cite{calegari_leafwise_2001}.} His method extends to show that a $C^{l,l}$ foliation can be improved to a $C^{\infty, l}$ foliation by a $C^l$ isotopy. (His Lemma 3.2 holds for $C^{l,l}$ foliations, so one need only check that Lemma 3.3 in \cite{calegari_leafwise_2001} works using $C^l$ isotopies.) However, it might not be possible to improve the transverse regularity class of a foliation; the holonomy maps of a $C^{k,0}$ foliation are $C^0$ functions which might not be conjugate to smooth functions. A foliation is called $C^l$ when it is $C^{l,l}$. We will also use the notation $C^{k,l}$ to denote the regularity class of functions on $M$.

		\plabel{fix:folconventions}{In this paper, all manifolds are closed and oriented. All foliations are oriented and co-oriented. In light of Calegari's result, we will assume from now on that our foliations are $C^{\infty,2}$ unless stated otherwise.}
	\end{subsection}
	\begin{subsection}{Taut foliations}
		\plabel{fix:6.4}{An oriented $C^{\infty,2}$ foliation $\F$ on an oriented 3-manifold $M$} is \emph{taut} if any of the following equivalent conditions hold:
		\begin{enumerate}
			\item For each point $x\in M$, there is a closed curve transverse to $\F$ passing through $x$.
			\item There is a volume preserving flow transverse to $\F$.
			\item There is a closed 2-form $\omega$ evaluating positively on $T\F$.
		\end{enumerate}
		Taut foliations enjoy a number of good properties.
		\begin{enumerate}
			\item Loops transverse to $\F$ are not contractible.
			\item Leaves of $\F$ are $\pi_1$-injective. 
		\end{enumerate}	
		If we exclude the exceptional case $M\cong S^1\times S^2$ we can say more.
		\begin{enumerate}
			\item $M$ is irreducible.
			\item The universal cover $\ucover M$ is $\R^3$.
			\item Leaves of $\ucover F$ are properly embedded planes in $\ucover M$.
		\end{enumerate}
		If a foliation has no compact leaves, then it is taut.
	\end{subsection}

	\begin{subsection}{Contact structures}\label{sec:prel4}
		A \emph{contact structure} on an oriented 3-manifold $M$ is a plane field $\xi$ such that there exists a 1-form $\alpha$ with $\xi=\ker(\alpha)$ and $\alpha \wedge d\alpha > 0$. The \emph{Reeb flow} of $\alpha$ is a vector field $R$ uniquely defined by the following properties:
		\begin{align*}
			d\alpha(R,-)&=0\\
			\alpha(R)&=1
		\end{align*}
		The Reeb flow is always transverse to $\xi$, and moreover is transverse to any tangent plane on which $d\alpha > 0$. The Reeb flow preserves $\alpha$. In particular, it preserves the volume form $\alpha \wedge d\alpha$. Compare this with the second definition of taut foliation above; it follows that a foliation with a transverse Reeb flow is taut. If $\xi=\ker(\alpha)$ for some $\alpha$ whose corresponding Reeb flow has no contractible closed orbits, we say that $\xi$ is \emph{hypertight}.

		Given a surface $\Sigma \subset M$, the \emph{characteristic foliation} of $\xi$ on $\Sigma$ is the singular codimension 1 foliation on $\Sigma$ defined by the intersection of $\xi$ with $\Sigma$.
	\end{subsection}

	\begin{subsection}{Transverse measures}\label{sec:prel2}
		A \emph{transversal} to a foliation $\F$ is a smooth closed arc positively transverse to $\F$. When $M$ is endowed with a Riemannian metric, we define an \emph{orthogonal transversal} to be a transversal which is orthogonal to $\F$.

		A transverse measure $\tau$ is an assignment of a non-negative real number to each transversal to $\F$. We ask that the assignment be countably additive under concatenation of arcs. 

		Suppose that $I_1$ and $I_2$ are two transversals. \plabel{fix:6.2}{We say that $I_1$ and $I_2$ are homotopic if there is a homotopy $H:[0,1]\times [0,1] \to M$ between them such that $H|_{[0,1]\times \set{0}}=I_1$, $H|_{[0,1]\times \set{1}}=I_2$, $H|_{\set{0}\times [0,1]}$ is contained in a leaf of $\F$, and $H|_{\set{1}\times[0,1]}$ is contained in a leaf of $\F$.}

		$\tau$ is called an \emph{invariant transverse measure} if $\tau(I_1)=\tau(I_2)$ whenever $I_1$ and $I_2$ are homotopic and $\tau$ is not the zero transverse measure.

		A compact leaf $\lambda$ gives rise to an invariant transverse measure which assigns to each transverse arc the number of intersections with $\lambda$. We will generally be concerned with \emph{$C^{k,l}$ transverse measures of full support}. \plabel{fix:6.3}{By this we mean a transverse measure which can be encoded as a nowhere-vanishing $C^{k,l}$ 1-form $\tau$ with $\ker(\tau)=T\F$.} Note that a $C^{\infty,k}$ foliation always admits a $C^{\infty,k-1}$ transverse measure of full support. In a foliation chart with coordinates $x,y,z$, one may take the transverse measure to be $g\, dz$ where $g$ is any positive $C^{\infty,k-1}$ function. 
	\end{subsection}

	\begin{subsection}{Perturbing away transverse measures}\label{sec:generic}
		\plabel{fix:22.1}{The following lemma was observed by Bowden and documented in \cite[Lemma 7.2]{zhang_monopole_2016}:}
		\begin{lem}\label{lem:perturb}
			Let $M$ be an atoroidal 3-manifold and $\F$ a $C^\infty$ taut foliation on $M$. Then $\F$ can be $C^0$ approximated by a $C^\infty$ taut foliation $\F'$ such that either $\F'$ has no transverse invariant measure or the pair $(M,\F')$ is homeomorphic to a surface bundle over $S^1$ foliated by the fibers.
		\end{lem}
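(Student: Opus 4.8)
The plan is to reduce the problem, via the structure of transverse invariant measures, to two cases — $\F$ has a closed leaf, or $\F$ carries a transverse invariant measure of full support — and then to invoke the genericity theorem of Bonatti--Firmo to remove closed leaves except when doing so is obstructed by a fibration.

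If $\F$ has no transverse invariant measure we take $\F'=\F$, so assume it does. Passing to an ergodic component of a nonzero transverse invariant measure and using Sacksteder's theorem, one excludes exceptional minimal sets as supports: for a $C^2$ foliation no leaf carrying the support of an invariant measure can have contracting holonomy, since a contraction cannot preserve a finite measure (this is the same contraction-versus-invariance mechanism underlying \cref{thm:structure}, and it is already used in the proof of \cref{lem:lapint}). Hence one reduces to the case that $\F$ has a closed leaf or a transverse invariant measure of full support. The full-support case is disposed of using atoroidality: full support forces the holonomy pseudogroup of the $C^2$ foliation to be equicontinuous, so $\F$ is transversely Riemannian, and a co-orientable transversely Riemannian codimension-$1$ foliation of a closed $3$-manifold is either a fibration over $S^1$ — in which case we are already in the second alternative — or it forces an incompressible torus in $M$, contradicting atoroidality. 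So we may assume $\F$ has a closed leaf $L$; by tautness $L$ is $\pi_1$-injective, hence an incompressible closed oriented surface, hence of genus $\geq 2$ since $M$ is atoroidal. (One should note that a taut foliation with a nonzero transverse invariant measure has a nonzero Ruelle--Sullivan class in $H_2(M;\R)$, since tautness forbids null-homologous foliation cycles, so $b_1(M)>0$ — consistent with the surface-bundle alternative, though not enough to conclude $\F$ itself is close to a fibration.)

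The remaining work is to handle closed leaves. I would invoke the genericity theorem of Bonatti--Firmo for $C^\infty$ codimension-$1$ foliations: a $C^\infty$-small (in particular $C^0$-small) perturbation $\F'$ of $\F$ can be arranged to have no exceptional minimal set and only finitely many closed leaves, each with nondegenerate holonomy, or none at all; and $\F'$ remains taut, because a closed $2$-form positive on $T\F$ stays positive on $T\F'$ once $\F'$ is sufficiently $C^1$-close to $\F$. If $\F'$ has no closed leaf, then the reduction of the previous paragraph, applied to $\F'$, shows $\F'$ has no transverse invariant measure (no closed leaf, and no full-support one by atoroidality), and we are in the first alternative. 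If $\F'$ does have closed leaves, I would argue that, since there are no exceptional minimal sets and no full-support invariant measure is available, the leaves spiralling onto a closed leaf must spiral between consecutive closed leaves; combined with tautness, which forbids dead-end (generalized Reeb) behaviour, this organizes $M$ into product regions glued along the closed leaves, identifying $(M,\F')$ with a surface bundle over $S^1$ foliated by its fibers. Atoroidality then forces the monodromy to be pseudo-Anosov and the fiber to have genus $\geq 2$, and we are in the second alternative.

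The main obstacle is this last identification step: showing that a taut $C^\infty$ foliation on an atoroidal $3$-manifold all of whose transverse invariant measures are carried by a nonempty finite set of nondegenerate closed leaves must actually be a fibration by fibers — equivalently, ruling out a ``robust'' closed leaf that is neither removable by a further $C^\infty$-small perturbation nor part of a space-filling family of closed leaves. This is exactly where the fine structure in the Bonatti--Firmo argument is needed, together with the sutured-manifold picture of a taut foliation cut along a closed leaf: one uses tautness to supply the escape routes for the spiralling leaves and thereby pin down the global product structure. A secondary, more routine obstacle is verifying that every perturbation invoked is $C^0$-small and taut-preserving, and that the full-support case really does fall away under atoroidality via the transversely Riemannian structure.
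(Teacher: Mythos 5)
Your skeleton (ergodic decomposition, Sacksteder to exclude exceptional minimal sets, hence ``compact leaf or full support'', then Bonatti--Firmo) is the same as the paper's, but two steps go wrong. First, the full-support case: your claim that a co-orientable codimension-one transversely Riemannian foliation of a closed $3$-manifold is ``either a fibration over $S^1$ or forces an incompressible torus'' is false. A full-support invariant measure with no atoms makes the foliation topologically one defined by a closed nonvanishing $1$-form, and if that form has irrational periods the foliation is minimal and is \emph{not} a fibration, yet no torus appears --- e.g.\ take a hyperbolic fibered $3$-manifold with $b_1\geq 2$ and tilt the fibration in an irrational direction in $H^1(M;\R)$. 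So you cannot derive a contradiction with atoroidality here. The correct move, and the one the paper makes, is Tischler's theorem: such a foliation is a $C^0$-small \emph{deformation} of a fibration, so this case lands in the second alternative of the lemma rather than being excluded.

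Second, the closed-leaf case. The paper uses Bonatti--Firmo in its strong form: a compact leaf of genus $\geq 2$ of a $C^\infty$ foliation can be perturbed away outright (atoroidality rules out torus leaves and tautness rules out sphere leaves, so \emph{all} compact leaves are removed). You instead invoke a weaker genericity statement (finitely many nondegenerate closed leaves survive) and are then forced into the step you yourself flag as the main obstacle --- showing that persistent closed leaves organize $M$ into a fibration. That step is not proved in your proposal and is not needed: with the stronger form of Bonatti--Firmo there are no compact leaves left, and the only remaining source of invariant measures is the full-support case, which is handled by Tischler as above. As written, your argument therefore has a genuine gap in the closed-leaf case and an incorrect dichotomy in the full-support case, even though the overall decomposition into cases is the right one.
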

		\begin{proof}[Proof sketch]
			On $C^2$ foliations, invariant transverse measures are either supported on compact leaves or have full support. \plabel{fix:22.2}{This is a consequence of Sacksteder's theorem~\cite[, Theorem 8.2.1]{sacksteder_foliations_1965, candel_foliations_2000-1}.} In the former case, if $\F$ is $C^\infty$, a result of Bonatti--Firmo allows us to perturb away compact leaves of genus $\geq 2$~\cite{bonatti_feuilles_1994}. In the latter case, Tischler showed that $(Y,\F)$ is a deformation of a fibration~\cite{tischler_fibering_1970}. See \cite[Lemma 7.2]{zhang_monopole_2016} for more details.
		\end{proof}
		\begin{proof}[Proof of \cref{cor:generic}]
			Given a $C^\infty$ taut foliation $\F$ on an atoroidal manifold $M$, \cref{lem:perturb} yields a new foliation $\F'$ which is $C^0$ close to $\F$ that either has no invariant transverse measure or is a surface bundle over $S^1$. If $\F'$ has no invariant transverse measure, then \cref{thm:main} applies. Suppose instead that $\F'$ is a surface bundle over $S^1$. Since $M$ is atoroidal, the fiber genus is at least 2 and the monodromy is pseudo-Anosov. By \cite[Theorem 3.17]{colin_foliations_2018}, the (unique) contact perturbation of $\F'$ is hypertight.
		\end{proof}
	
		\begin{proof}[Proof of \cref{cor:cylindrical}]
			Vogel proved that if $\F$ is a $C^2$ taut foliation on an atoroidal 3-manifold, all contact structures in a $C^0$ neighbourhood of $T\F$ are isotopic. Moreover, pairs of taut foliations which are homotopic through taut foliations have isotopic contact approximations \cite[Theorem 9.3]{vogel_uniqueness_2016}. By \cref{cor:generic}, this contact approximation is hypertight and hence has well-defined cylindrical contact homology.
		\end{proof}
	\end{subsection}
	
	\begin{subsection}{Holonomy}\label{sec:prel3}
		Let $M$ be an oriented 3-manifold with a taut, co-oriented $C^2$ foliation, $\F$. 

		Let $\gamma\colon [0,T]\to M$ be a path which is contained in a leaf of $\F$ and supported in a single foliation chart $U$. Choose $I_0,I_T$, two transversals to $\F$ contained in $U$ and passing through $\gamma(0),\gamma(T)$ respectively. We define the holonomy map $h_\gamma\colon I_0 \to I_T$ to be the map which preserves the transverse coordinate of the foliation chart, i.e. for all $x \in I_0$, $x$ and $h_\gamma(x)$ lie on the same leaf in $U$. If $\gamma'$ is another path in the same leaf of $U$ and is homotpic rel endpoints to $\gamma$, then then $h_{\gamma}=h_{\gamma'}$. The holonomy map can be defined for arbitrary paths $\gamma$ tangent to $\F$ by composing holonomy maps for short subpaths. As $\gamma$ grows in length, the domain of $h_\gamma$ tends to shrink. From our perspective, we are usually just concerned with the germ at 0 of $h_\gamma$, so this shrinking of the domain does not matter.

		Since $\F$ is a $C^2$ foliation, $h_\gamma$ is $C^2$. Thus, we may take its derivative at $\gamma(0)$ which we call $$h_\gamma':T_{\gamma(0)}(I_0)\to T_{\gamma(T)}(I_T).$$ The embedding of $I_0$ in $M$ gives an identification of $T_{\gamma(0)}(I_0)$ with $T^\perp_{\gamma(0)}\F$, and likewise an identification of $T_{\gamma(T)}I_T$ with $T^\perp_{\gamma(T)}\F$. Via this identification, we may regard $h_\gamma'$ as a map from $T^\perp_{\gamma(0)}\F$ to $T^\perp_{\gamma(T)}\F$. This map depends only on $\gamma$ and not on the choice of $I_0$ and $I_T$. Moreover, if $\gamma$ and $\gamma'$ are two paths in the same leaf of $\F$ and are homotopic rel endpoints, then 
		\begin{equation}\label{eqn:hmtpy}
			h'_{\gamma}=h'_{\gamma'}
		\end{equation}

		Suppose now that $\F$ is equipped with a $C^{\infty,1}$ transverse measure $\tau$, encoded as a 1-form with $\ker(\tau)=T\F$ and which evaluates positively on transversals to $\F$. Then we define $$\abs{h_\gamma'}_\tau=\frac{\tau(h_\gamma'(v))}{\tau(v)}$$ where $v$ is any non-zero vector in $T^\perp_{\gamma(0)}\F$. In other words, $\abs{h_\gamma'}_\tau$ is the factor by which holonomy along $\gamma$ stretches $\tau$-lengths. Note that this norm does not depend heavily on the choice of $\tau$. Indeed, given any two such differentiable transverse measures $\tau, \mu$, we have $\tau = g\mu$ for some strictly positive function $g$ on $M$. \plabel{fix:8.1}{Then for any path $\gamma$ in a leaf of $\F$, one can verify that $$ \inf_{M} g^2 \leq \frac{|h_\gamma'|_\mu}{|h_\gamma'|_\tau} \leq \sup_{M}g^2.$$}

		Given a leaf $\lambda$, \plabel{fix:8.2.1}{let $\widetilde \lambda$ be the universal cover of $\lambda$. At this point, we are thinking of $\ucover \lambda$, not as embedded in $\ucover M$.} Choose a basepoint $b$ in $\ucover \lambda$. We define the Radon-Nikodym derivative of $\tau$ to be the function $f_{\tau,\lambda,b}\colon\ucover{\lambda}\to \R$ determined by the formula
		$$ f_{\tau,\lambda,b}(x)=\abs{h_{\gamma}'}_{\tau},$$
		where $\gamma$ is a path in $\lambda$ which lifts to a path $\ucover \gamma$ in $\ucover \lambda$ from $b$ to $x$. By the homotopy invariance property observed in \cref{eqn:hmtpy}, this definition is independent of the choice of the path $\gamma$. One should think of $f_{\tau,\lambda,b}$ as the transverse distance to a nearby leaf as measured by $\tau$. Different choices of basepoint yield the same function $f_{\tau,\lambda,b}$ up to a constant factor. To be more precise, if $b_1$ and $b_2$ are two choices of basepoint on $\ucover \lambda$ and $\gamma$ is a path in $\lambda$ which lifts to a path in $\ucover \lambda$ from $b_1$ to $b_2$, then
		$$f_{\tau,\lambda,b_1}=|h'_\gamma|_\tau f_{\tau,\lambda,b_2}.$$ When there is no danger of ambiguity, we abbreviate $f_{\tau,\lambda,b}$ to $f_\tau$.

		\begin{rmk}\label{rmk:fsmooth}
			In a foliation chart with coordinates $x,y,z$ and leaves $z=\text{const}$, one may write any $C^{\infty, 1}$ transverse measure $\tau$ as $g\, dz$ for some $C^{\infty,1}$ function $g$. The restriction of $g$ to any given leaf $\lambda$ agrees with $f_{\tau,\lambda,b}$ up to multiplication by a constant depending only on the choice of basepoint. It follows that $f_{\tau,\lambda,b}$ is $C^\infty$ on each leaf.
		\end{rmk}

		A theme in what follows is that for the best transverse measures, $f_{\tau, \lambda,b}$ has no local minima. \plabel{fix:introduceriemann}{Fix a Riemannian metric on $M$.} Each leaf inherits a Riemannian metric from the Riemannian metric we chose on $M$. If $f_{\tau, \lambda, b}$ is a harmonic function on $\ucover \lambda$ for every choice of leaf $\lambda$, we say that $\tau$ is a \emph{harmonic transverse measure}. \plabel{fix:8.5}{By Theorem 1c of \cite{garnett_foliations_1983}, this is equivalent} to more traditional notions of harmonic transverse measure using the leafwise Laplacian operator, at least for differentiable transverse measures like $\tau$. See also \cite{candel_harmonic_2003} and \cite{deroin_random_2007} for more discussion of harmonic transverse measures. When $\log f_{\tau,\lambda,b}$ is superharmonic (resp. strictly superharmonic) on each leaf $\ucover \lambda$, we say that $\tau$ is \emph{log superharmonic} (resp. \emph{strictly log superharmonic}). Since $\log$ is a concave function, every harmonic transverse measure is also log superharmonic.

		While $f_{\tau}$ is defined on $\ucover \lambda$ and makes sense only up to constant factors for each leaf, several related objects descend to $M$. The 1-form $d\log f_{\tau}$ is a well-defined section of $T^*\F$. It measures the infinitesimal rate of contraction or expansion of leaves in directions tangent to $\F$. The function $\Delta \log f_\tau$, where $\Delta$ denotes the leafwise Laplace-Beltrami operator, is consequently also defined on $M$. Finally, the leafwise level sets of $f_\tau$ descend to a singular codimension 2 foliation on $M$.

		\begin{prop}\label{prop:iotav}
			Suppose $\tau$ is a $C^{\infty,1}$ transverse measure of full support. Choose a vector field $v$ transverse to $\F$ with $\tau(v)=1$. Then $\iota_v d\tau=-d\log f_\tau(v) $ as sections of $T^* \F$.
		\end{prop}
		\begin{proof}
			Choose a foliation chart with coordinates $x,y,z$ where $\F = \ker(dz)$. We may further arrange that $v$ is parallel to the $z$ axis. In this coordinate system, $\tau= g \, dz$ for some $C^{\infty,1}$ function $g$. 

			Now
			\begin{align}
				\iota_v d\tau &= \iota_v (dg \wedge dz)\nonumber\\
				&= dg(v)\, dz - dz(v)\, dg\nonumber\\
				&= dg(v) \, dz - \frac 1 g dg\nonumber\\
				\intertext{\rightline{since $\tau(v)=1$}\qquad}
				&= dg(v) \, dz - d\log g \label{eqn:finalans}
			\end{align}

			As noted in \cref{rmk:fsmooth}, $g$ agrees with $f_\tau$ on each leaf up to a constant factor, so $d \log g = d \log f_\tau$ on each leaf. When we restrict \cref{eqn:finalans} to any given leaf $\lambda$, the term $dg(v) \, dz$ vanishes and we are left with $-d\log f_\tau$ as desired.
		\end{proof}
	\end{subsection}
	\begin{subsection}{Leafwise Brownian motion and diffusion}\label{sec:diffusion}
		We provide here a summary of the main properties of Brownian motion that we will use. \plabel{fix:8.6}{We direct the reader to \cite{morters_brownian_2010} for a comprehensive introduction to Brownian motion on $\R^n$. Chapters 3 and 4 of \cite{hsu_stochastic_2002} provide a treatment of the Riemannian case. One may also consult \cite{deroin_random_2007} which specializes to the leafwise Brownian motion that we will consider in this paper.}

		Let $\lambda$ be a complete Riemannian manifold with bounded geometry and let $x$ be a point on $\lambda$. As suggested by the notation, the Riemannian manifold we consider will usually be a leaf of $\F$ with a Riemannian metric inherited from one on $M$. Such a surface always has bounded geometry because $M$ is compact. \plabel{fix:8.4}{Let $\Gamma_x$ be the space of all continuous paths $\gamma:[0,\infty)\to \lambda$ satisfying $\gamma(0)=x$. We equip $\Gamma_x$ with the uniform topology on compact sets of $[0,\infty)$ and will use the induced Borel $\sigma$-algebra.}

		On $\Gamma_x$ there is a probability measure $W_x$ called the \text{Wiener measure}. We will often refer to a path drawn from $W_x$ as a Brownian path. For any function $f$ on $\lambda$, we define its time $t$ diffusion $D^t(f)$ by
		$$D^t(f)(x)=\E_{\gamma \sim W_x}\bracket{f(\gamma(t))}.$$ Let $\Delta$ be the Laplace-Beltrami operator on $\lambda$. The Wiener measure satisfies the following properties:
		\begin{enumerate}
			\item \textbf{Markov property.} Suppose $\gamma$ is drawn from $W_x$. Fix some $t_0 \geq 0$ and any $y\in \lambda$. Then after conditioning on $\gamma(t_0)=y$, $\gamma|_{[t_0,\infty)}$ is distributed like $W_y$. This implies that for any $t,t'\geq 0$, $$D^{t+t'}=D^t\circ D^{t'}.$$
				\item \textbf{Adapted to the metric.} For a real valued $C^2$ function $f_0$ on $\lambda$, $$\frac{\partial }{\partial t}D^t f_0 \bigg|_{t=0}=-\Delta f_0.$$ In other words,	$D^t(f)$ is the time $t$ solution to the heat equation $$\frac {\partial f} {\partial t} = -\Delta f$$ with initial condition $f_0$. 
		\end{enumerate}

		Combining these two properties with linearity of expectation, we find that $\Delta$ commutes with diffusion when applied to a $C^2$ function $f$:
		\begin{align}
			\Delta D^{t'} f &= -\frac {\partial}{\partial t} D^t D^{t'} f\bigg|_{t=0}\\
			&= -D^{t'}\frac {\partial}{\partial t} D^t f\label{eqn:mid}\bigg|_{t=0}\\
			&= D^{t'}\Delta f\label{eqn:lap}
		\end{align}

		\begin{rmk}
			The hypothesis of bounded geometry allows us to disregard paths that leave the manifold in finite time.
		\end{rmk}

		In the course of the paper, we will define another diffusion operator $\DD_{T,R,S}$ acting on transverse measures. It should not be confused with $D^t$.
	\end{subsection}
	\begin{subsection}{Forms and Currents}\label{sec:currents}
		\plabel{fix:19.2}{In the rest of the paper, we will use the language of currents carried by foliations. We refer the reader to \cite{sullivan_cycles_1976} and \cite{dinh_introduction_2005} for more background on currents and foliations.} Informally, an $i$-current on a manifold $M$ is an oriented $i$-dimensional submanifold of $M$ (possibly disconnected, possibly with boundary), or a weak limit of such submanifolds. Similarly, a 2-current carried by a codimension 1 foliation $\F$ on a 3-manifold is informally a weak limit of surfaces contained in leaves of $\F$. 

		Let us define the relevant spaces more carefully and with some attention to regularity. Let $U$ be a foliation chart. Let $x,y,t$ be local coordinates for $U$, where $x$ and $y$ are coordinates for tangential directions and $t$ is the transverse coordinate. Let $A$ be a subset of $\set{dx,dy,dt}$, and let $f$ be a $C^{k,l}$ function on $U$. Let $\chi$ be the indicator function for the subset $A$. Then we say that the differential form $f \bigwedge_{\beta \in A} \beta$ has \emph{adjusted regularity} $(k+|A|, l+\chi(dt))$. For example, if $f$ is $C^{k,l}$, then $f\,dx\wedge dt$ has adjusted regularity $(k+2, l+1)$. The adjusted regularity of a sum of such differential forms is the minimum of the adjusted regularities of its summands. The adjusted regularity of a differential form on $M$ is $(k,l)$ if has adjusted regularity $(k,l)$ in each foliation chart. With this definition, the exterior derivative preserves adjusted regularity.

		In \cref{sec:farkas}, it will be more convenient to work in a Sobolev space. If the function $f$ is in the Sobolev space of functions with $L^2$ norms on any partial derivatives with order at most $l$ in the transverse direction and total order at most $k$, then we say that $f$ has \emph{Sobolev adjusted regularity} $(k,l)$. Similarly, we say that $f \bigwedge_{\beta \in A} \beta$ has \emph{Sobolev adjusted regularity} $(k+|A|, l+\chi(dt))$.

		Now let $\Omega^i(M)$ denote the space of $i$-forms of adjusted regularity $C^{\infty,2}$. Note that $\bigoplus_i \Omega^i(M)$ is a chain complex. We define the $i$-currents to be elements of the topological dual space to $\Omega^i(M)$. We usually use $\partial$ to denote the differential on the dual complex. A 2-current \emph{carried by $\F$} is a 2-current which lies in the closure of the set of 2-currents represented by subsurfaces of leaves of $\F$.

		$i$-currents behave like $i$-dimensional submanifolds; one can evaluate $i$-forms on them, their boundaries are $(i-1)$-currents, and Stokes' theorem holds. 
	\end{subsection}
	\begin{subsection}{Smoothness of heat kernels}
		\newcommand{\Ko}{K_{\text{approx}}}
	\newcommand{\Err}{\mathcal E}
	\newcommand{\X}{\partial_t - \Delta_g}
	\newcommand{\fspace}{\mathcal X}
	\newcommand{\poly}{\text{poly}}
	\newcommand{\gaussianvol}{\frac {\sqrt {g(x)}}{(4\pi (t-t_0))^{d/2}}}
	\newcommand{\rr}[1]{\text{inj}(#1)}
	\newcommand{\gaussian}[4]{\exp \left(-\frac{g(x)_{ij}({#1}-{#2})^{\otimes 2, ij}}{4(t_{#4}-t_{#3})}\right)} 
	\newcommand{\simplegaussian}{\exp \left(- \frac{g_{ij}(x) (x-y)^{\otimes 2,ij}}{4t}\right)}
	\newcommand{\simplegaussianbar}{\exp \left(- \frac{g_{ij}(x) (\overline x-y)^{\otimes 2,ij}}{4t}\right)}
	\newcommand{\upsfact}{\upsilon(x)}
	\newcommand{\kapprox}{k_{\text{approx}}}
	\newcommand{\intboundsA}{\substack{0\leq t_0 \leq t\\ x \in \Omega}}
	\newcommand{\dta}{(t_{\alpha+1}-t_{\alpha})}
	\newcommand{\upsxy}{ \upsilon\left(\frac{\abs{x-y}}{\rr{y}}, \frac{t-t_0}{\rr{y}^2}\right)}
	\newcommand{\sspace}{\mathcal Y}

\plabel{fix:13.2}{In this section, we prove the following proposition which will be useful in the proof of \cref{lem:smooth}.} 

\begin{prop}\label{prop:perturbkernel}
	Let $\Omega$ be the unit disk in $\R^d$. Let $\set{g_\eps}_{\eps\in (-1,1)}$ be a $1$-parameter family of metrics on a neighbourhood of $\Omega$. Let $k_\eps(x,y,t):\Omega \times \Omega \times (0,\infty) \to \R$ be the heat kernel for the metric $g_\eps$ with Dirichlet boundary condition 0. Then for any fixed $t>0$, the derivatives of $k_\eps(x,y,t)$ with respect to $\eps$ exist and are continuous as long as the corresponding derivatives of $g_\eps$ exist and are continuous.
\end{prop}
Before beginning with the proof of \cref{prop:perturbkernel}, we set up some notation.

Let $\fspace$ be the space of smooth functions on $[0,\infty) \times \Omega$ which vanish on $\partial \Omega$. Let $\sspace$ be the space of smooth functions on $\Omega \times \Omega \times [0,\infty)$ which extend smoothly to a neighbourhood of $\Omega$. The $[0,\infty)$ factor represents time. We will write $g$ for $g_\eps$ and use the shorthand $\sqrt g = \sqrt {\det g_{ij}}$. We will use Einstein summation notation throughout this section. The coordinates of vectors will be written with upper indices. Given a vector $x$, we use $x^{\otimes n, i\dots j}$ to denote the $n^{th}$ tensor power of $x$ with indices $i,\dots,j$. For example, if $x$ is a vector, then $x^{\otimes 2, ij}$ is the tensor square of $x$ with indices $i$ and $j$. We will use $|\cdot|$ to denote distance in the standard Euclidean metric.

Let $h$ be a positive number strictly smaller than the smallest eigenvalue of $g_{ij}(x)$ for all $x\in \Omega$. For any $x\in \Omega$ at Euclidean distance less than $\sqrt h$ to $\partial \Omega$, there is a unique closest point to $x$ on $\partial \Omega$. Call the reflection of $x$ through this closest point $\overline{x}$. Here, ``closest'' and ``reflection'' are taken not with respect to the Euclidean metric, nor with respect to the Riemannian metric $g_{ij}$, but with respect to the flat metric induced by the inner product $g_{ij}(x)$.

Given a smooth function $k:\Omega \times \Omega \times (0,\infty)\to \R$, we say that $k$ is \emph{Gaussian-type} if $k$ can be written as
\begin{equation}\label{eqn:gaussiantype}
	k(x,y,t) = c(x,y,t) \frac{(x-y)^{\otimes a}}{t^{d/2+b}} \simplegaussian
\end{equation}
or
\begin{equation}
	k(x,y,t) = c(x,y,t) \frac{(\overline x-y)^{\otimes a}}{t^{d/2+b}} \simplegaussianbar
\end{equation}
for some non-negative integers $a$ and $b$ and a smooth function $c(x,y,t)$ in $\sspace$.

We say that the \emph{order} of $k$ is $a - 2b+1$. A Gaussian-type function of high order has a relatively mild singularity at $x=y, t=0$.

\begin{prop}\label{prop:diff}
If $f$ is a Gaussian-type function of order $k$, then $\partial_t f$ is Gaussian-type of order $k-2$ and $\partial_x f$ and $\partial_y f$ are Gaussian-type of order $k-1$. The derivative of $f$ with respect to $g$ is of order $k$.
\end{prop}

\begin{prop}\label{prop:subcriticalint}
If $k(x,y,t)$ is Gaussian-type of non-negative order and $f$ is a bounded function, then $$\abs{\int_{\Omega} k(x,y,t)f(t,x) dx} \leq \frac C {\sqrt{t}}$$ for some constant $C$.
\end{prop}
\begin{proof}
	Suppose $k$ has the form of \cref{eqn:gaussiantype}. Let $C$ be an upper bound for $\abs{c(x,y,t)}$ over $\Omega \times \Omega \times [0,t]$. Recall that $h$ is a small enough constant that $g_{ij}-hI$ is positive definite everywhere. Let $F$ be an upper bound for $\abs{f}$. We will make the substitution $r=\abs{x-y}$ in the integral below:
\begin{align*}
	\abs{\int_\Omega k(x,y,t) f(t,x)dx} & \leq \int_\Omega CF \frac{\abs{x-y}^a}{t^{d/2+b}} \exp\left(-\frac{h\abs{x-y}^2}{4t}\right)dx\\
	&<\int_0^\infty CF \frac{r^{a+d-1}}{t^{d/2+b}} \exp\left(-\frac{hr^2}{4t}\right) dr\\
	&\propto t^{a/2-b}
\end{align*}
If the order of $k$ is non-negative, then $a/2-b\geq -1/2$ as desired. In performing the integral over $r$, we used the Gaussian integral
\begin{dmath*}
	\int_0^\infty r^k\exp \left(-\frac {r^2} {t}\right)\,dr \propto t^{(k+1)/2}
\end{dmath*}
where the constant of proportionality depends only on $k$. The same argument works for Gaussian-type functions of the second kind.
\end{proof}

\begin{prop}\label{prop:sqrtestimate}
	Let $$Z(n)=
	\int_{0 \leq t_0 \leq \hdots \leq t_n = 1 } \prod_{\alpha=0}^{n-1}\left(\frac 1 {\sqrt{t_{\alpha+1}-t_\alpha}}\right) dt_0\hdots dt_{n-1}$$
	The integral converges and the explicit bound $$Z(n)<\frac {10^{n}}{\sqrt {n!}}$$ holds.
\end{prop}

\begin{proof}
	We work by induction. The case $n=0$ holds trivially. Note that $Z(n)$ has the following scaling property:
	$$
	\int_{0 \leq t_0 \leq \hdots \leq t_{n} = T} \prod_{\alpha=0}^{n-1}\left(\frac 1 {\sqrt{t_{\alpha+1}-t_\alpha}}\right) dt_0\hdots dt_{n-1} = \sqrt{T^{n}} Z(n)$$
	This is because the measure scales like $T^{n}$ and the integrand scales like $\frac 1 {\sqrt{T^{n}}}$.
	
	Now assume that the result holds for $Z(n)$. We now separate integration over the final variable $t_{n}$ from integration over the remaining variables in the expression for $Z(n+1)$.

	\begin{align*}
		Z(n+1) &= \int_0^1 \frac 1 {\sqrt {1-t_{n}}} \left(\int_{0\leq t_0\leq ...\leq t_{n}}   
		\prod_{\alpha=0}^{n-1}\left(\frac 1 {\sqrt{t_{\alpha+1}-t_\alpha}}\right) dt_0\hdots dt_{n-1} \right)dt_{n}\\
		 &=\int_0^1 \frac {1} {\sqrt {1 - t_{n}}} \cdot (\sqrt{t_{n}})^nZ(n) dt_{n}\\
		 &< \frac {10^n} {\sqrt {n!}} \int_0^1 \frac {t^{n/2}} {\sqrt {(1-t)}}  dt \tag*{by the induction hypothesis}\\
		 &= \frac {10^n} {\sqrt {n!}} \left(
		 \int_0^{1-1/n} \frac {t^{n/2}} {\sqrt {(1-t)}}dt
		 +\int_{1-1/n}^1 \frac {t^{n/2}} {\sqrt {(1-t)}}dt
		 \right)\\
		 &< \frac {10^n} {\sqrt {n!}} \left(
		 \int_0^{1-1/n} \frac {t^{n/2}}{\sqrt{1/n}}dt
		 +\int_{1-1/n}^1 \frac {1} {\sqrt {(1-t)}}dt
		 \right)\\
		 &< \frac {10^n} {\sqrt {n!}} \left(
		 \int_0^{1} {\sqrt n} {t^{n/2}}dt
		 +\int_{1-1/n}^1 \frac {1} {\sqrt {(1-t)}}dt
		 \right)\\
		 &< \frac {10^n} {\sqrt {n!}} \left(\frac {10} {\sqrt {n+1}} \right)\\
		 &< \frac {10^{n+1}}{\sqrt {(n+1)!}}
	\end{align*}
	This completes the induction.
\end{proof}

\begin{proof}[Proof of \cref{prop:perturbkernel}]

	Our first step is to construct a reasonable approximation to the heat kernel. To do this, we use the method of images. 
	
	Given a source at a point $x\in \Omega$, we add a sink at $\overline x$. The solution to the heat equation on $\R^d$ with this source and sink will be approximately zero on $\partial \Omega$, and therefore is approximately a solution to the heat equation on $\Omega$ with Dirichlet boundary condition 0. Let us make this more quantitative. Let $\upsilon:\Omega \to \R$ be a smooth function satisfying $\upsilon(x)=0$ when $x$ is at Euclidean distance $\geq \sqrt h$ from $\partial \Omega$ and $\upsilon(x)=1$ when $x$ is at Euclidean distance $\leq \sqrt{h}/2$ from $\partial \Omega$. Thus, $\overline x$ is well defined whenever $\upsilon(x)\neq 0$. Let $$k_0(x,y,t)=\frac{\sqrt {g(x)}}{(4\pi t)^{d/2}}\simplegaussian$$ and $$k_1(x,y,t)=\upsfact \frac{\sqrt {g(x)}}{(4\pi t)^{d/2}}\simplegaussianbar.$$

	Thanks to the cutoff function $\upsilon$, $k_1$ is smooth and well defined at all $x \in \Omega$, $y\in \R^d$, and $t>0$. Our first approximation to the heat kernel is $k_0 - k_1$. By symmetry, $k_0-k_1$ vanishes along a hyperplane tangent to $\partial \Omega$. Although $k_0-k_1$ does not vanish for $y\in \partial \Omega$ as required by the Dirichlet boundary condition, we have the following bound:

	\begin{lem}\label{lem:whitney}
		For $x\in \Omega$ and $y\in \partial\Omega$,
		\begin{equation*}
		k_0(x,y,t)-k_1(x,y,t) = c(x,y,t)_{ijk} \frac{{(x-y)}^{\otimes 3,ijk}}{t^{d/2+1}} \simplegaussian
		\end{equation*}
		where $c$ is some smooth tensor valued function each of whose components is in $\sspace$.
	\end{lem}
	\begin{proof}
		Start by comparing $k_0(x,y,t)-k_1(x,y,t)$ to the desired form
		\begin{align}
			c(x,y,t)_{ijk} {(x-y)}^{\otimes 3,ijkl}
			&=\frac{k_0(x,y,t)-k_1(x,y,t)}{\frac 1 {t^{d/2+1}}\simplegaussian} \nonumber \\
			&= \frac{t\sqrt {g(x)}}{(4\pi)^{d/2}} - \upsfact \frac{t\sqrt {g(x)}}{(4\pi)^{d/2}} \exp \left(- \frac{g_{ij}(x) (\overline x-y)^{\otimes 2,ij}-g_{ij}(x) (x-y)^{\otimes 2,ij}}{4t}\right)\label{eqn:lem2a}
		\end{align}

		Since $\overline x$ is a reflection of $x$ in $\partial \Omega$ and $\Omega$ is convex (with respect to the flat metric $g_{ij}(x)$), we have
		\begin{align*}
		g_{ij}(x) (\overline x-y)^{\otimes 2,ij}>g_{ij}(x) (x-y)^{\otimes 2,ij}
		\end{align*}
		Therefore, the exponential term on the right side of \cref{eqn:lem2a} is bounded. In the regime in which where $\abs{x-y}$ is large, say $\geq \sqrt h / 2$, we may use \cref{eqn:lem2a} to choose a smooth candidate for $c(x,y,t)$.

		Now consider the remaining regime where $\abs{x-y}$ is less than $\sqrt h / 2$. Since $y\in \partial \Omega$, the Euclidean distance between $x$ and $\partial \Omega$ is less than $\sqrt h /2$. Therefore, $\upsilon=1$ and the expression in \cref{eqn:lem2a} reduces to

	\begin{equation}\label{eqn:lem2b}
		c(x,y,t)_{ijk} {(x-y)}^{\otimes 3,ijkl}=\frac{t\sqrt {g(x)}}{(4\pi)^{d/2}}\left(1-\exp \left(- \frac{g_{ij}(x) (\overline x-y)^{\otimes 2,ij}-g_{ij}(x) (x-y)^{\otimes 2,ij}}{4t}\right)\right)
	\end{equation}

	It will now be convenient to make a linear change of coordinates so that the midpoint of $x$ and $\overline x$ is at the origin, $x$ lies on the first coordinate axis, and the hyperplane tangent to $\partial \Omega$ at the origin is spanned by the remaining coordinate axes. In this coordinate system, $g_{1i}=0$ for $i\neq 1$. By symmetry, $k_0-k_1$ vanishes whenever $y$ lies on this hyperplane. In this coordinate system, we have
	\begin{align*}
		g_{ij}(x) (\overline x-y)^{\otimes 2,ij}-g_{ij}(x) (x-y)^{\otimes 2,ij} &= g_{11}(\overline x^1-y^1)^2 - g_{11}(x^1-y^1)^2\\
		&= g_{11}(- x^1-y^1)^2 - g_{11}(x^1-y^1)^2\\
		&=4g_{11}x^1y^1\\
		&=O(|x-y|^3)
	\end{align*}
	In the last line we used the constraint that $y\in \partial \Omega$ which implies the geometrical facts that that $x^1=O(|x-y|)$ and $y^1 = O(|y-0|^2) = O(|x-y|^2)$. See \cref{fig:boundary}. Substituting back into \cref{eqn:lem2b}, we find
	\begin{align}\label{eqn:lem2c}
		\frac{k_0(x,y,t)-k_1(x,y,t)}{\frac 1 {t^{d/2+1}}\simplegaussian} &= \frac{t\sqrt {g(x)}}{(4\pi)^{d/2}}\left(1-\exp \left(- \frac{O(|x-y|^3)}{4t}\right)\right)\\
		&=\frac{\sqrt {g(x)}}{(4\pi)^{d/2}}O(|x-y|^3)
	\end{align}
	Thus, this equation defines the desired smooth function $c(x,y,t)$ when $y\in \partial \Omega$.
	\begin{figure}[ht]
		\centering
		\includegraphics[width=0.5\linewidth]{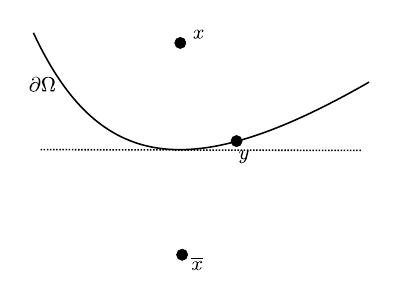}
		\caption{The dotted line is the hyperplane tangent to $\partial \Omega$ at the origin.
		}\label{fig:boundary}
\end{figure}
	\end{proof}

	By the Whitney extension theorem applied to the function $c(x,y,t)$ constructed in \cref{lem:whitney}, we can find a Gaussian-type function $k_2(x,y,t)$ of order $\geq 2$ such that $k_2(x,y,t)=k_0(x,y,t)-k_1(x,y,t)$ whenever $y\in \partial \Omega$. Now define $$\kapprox(x,y,t) = k_0(x,y,t) - k_1(x,y,t)-k_2(x,y,t).$$ By construction, $\kapprox(x,y,t)=0$ whenever $y\in \partial \Omega$. We will use $\kapprox$ as our first approximation to the heat kernel.

	Now we will write down a formal series for the heat kernel. We will then show that the series converges, as do its derivatives with respect to $\eps$. Finally we will check that the series satisfies the Dirichlet boundary condition.

	Define the operator $\Ko:\fspace \to \fspace$ by
	$$\Ko  f (t,y) = \int_{\intboundsA} \kapprox(x,y,t_0) dt_0\, dx$$

	$\Ko$ should be thought of as a first approximation to the heat kernel operator. Let $\Delta_{g}$ be the Laplace-Beltrami operator on $\Omega$ for the metric $g$. In coordinates, the Laplace-Beltrami operator has the form
	$$\Delta_g u = -\frac 1 {\sqrt g} \partial_i \sqrt g g^{ij} \partial_j$$

	Now we define the operator $\Err$ which measures the failure of $\Ko$ to be the true heat kernel operator:
	\begin{equation}
		\Err = Id - (\partial_t - \Delta_g)\Ko \label{eqn:edef}
	\end{equation}
	Rearranging \cref{eqn:edef}, we have
	$$(\partial_t - \Delta_g)\Ko (Id-\Err)^{-1} = Id$$

	Thus, 
	\begin{equation}\label{eqn:heatseries}
		K:=\Ko (Id + \Err + \Err^2 + \hdots)
	\end{equation}
	is a formal series for the heat kernel operator. We now show that the terms $\Ko\Err^n$ are small enough that the sum converges. In what follows, we will use the notation $\poly(\dots)$ to represent an unspecified polynomial in its arguments. When the arguments are tensors, $\poly(\dots)$ may be tensor valued. This polynomial will be allowed to change from line to line as it absorbs constants and the like. The degree and coefficients of this polynomial will crucially not depend on $n$.

	\begin{dgroup*}
		\begin{dmath*}
			\partial_t k_0(x,y,t) = \frac {\sqrt{g(x)}} {(4\pi)^{d/2}}\left(\frac {g(x)_{ij}(x-y)^{\otimes 2, ij}}{4t^{d/2+2}} -\frac{d}{{2t}^{d/2+1}} \right) \simplegaussian\\ + \delta(y-x,t)
		\end{dmath*}
		\begin{dmath*}
			\partial_t k_1(x,y,t) = \upsfact \left( \frac {\sqrt{g(x)}} {(4\pi)^{d/2}}\left(\frac {g(x)_{ij}(\overline x-y)^{\otimes 2, ij}}{4t^{d/2+2}} -\frac{d}{{2t}^{d/2+1}} \right) \simplegaussianbar\\ + \delta(y-\overline x,t) \right)
		\end{dmath*}
	\end{dgroup*}
	Here we are taking distributional derivatives, and $\delta(y-x,t)$ and $\delta(y-\overline x, t)$ are delta distributions.

	Now we apply the Laplace-Beltrami operator in the $y$-coordinate:
	\begin{dgroup*}
		\begin{dmath*}
			\Delta_g k_0(x,y,t) = \frac 1 {\sqrt {g(y)}} \partial_k \sqrt {g(y)} g(y)^{kl} \partial_l  \frac{\sqrt {g(x)}}{(4\pi t)^{d/2}}\simplegaussian
		\end{dmath*}
		\begin{dmath*}
			=\left( \frac {\sqrt{g(y)}} {(4\pi)^{d/2}} \left(\frac{g(y)^{kl}g(x)_{kl}}{2t^{d/2+1}} - \frac{g(x)_{ij}(x-y)^{\otimes ij}}{t^{d/2+2}}\right)\\
			+ \frac{\poly(g,\partial g, \partial\partial g)}{t^{d/2}}
			+ \frac{(x^i-y^i)\poly(g,\partial g,\partial \partial g, x-y)_i}{t^{d/2+1}}\\
			+ \frac{(x-y)^{\otimes ijk}\poly(g,\partial g, \partial \partial g, x-y)_{ijk}}{t^{d/2+2}}
			\right) \simplegaussian
		\end{dmath*}
		\begin{dmath*}
			\Delta_g k_1(x,y,t) = \frac 1 {\sqrt {g(y)}} \partial_k \sqrt {g(y)} g(y)^{kl} \partial_l  \upsfact \frac{\sqrt {g(x)}}{(4\pi t)^{d/2}}\exp \left( -\frac{g(x)_{ij}(\overline{x}-y)^{\otimes ij}}{4t} \right)
		\end{dmath*}
		\begin{dmath*}
			= \upsilon(x) \left( \frac {\sqrt{g(x)}} {(4\pi)^{d/2}} \left(\frac{g(y)^{kl}g(x)_{kl}}{2t^{d/2+1}} - \frac{g(x)_{ij}(\overline x-y^i)^{\otimes ij}}{t^{d/2+2}}\right)\\
			+ \frac{\poly(g,\partial g, \partial\partial g)}{t^{d/2}}
			+ \frac{(\overline x^i-y^i)\poly(g,\partial g,\partial \partial g, \overline x-y)_i}{t^{d/2+1}}\\
			+ \frac{(\overline x-y)^{\otimes ijk}\poly(g,\partial g, \partial \partial g, \overline x-y)_{ijk}}{t^{d/2+2}}
			\right) \simplegaussianbar
		\end{dmath*}
	\end{dgroup*}
	The terms $$\frac {\sqrt{g(x)}} {(4\pi)^{d/2}} \left(\frac{g(y)^{kl}g(x)_{kl}}{2t^{d/2+1}} - \frac{g(x)_{ij}(x-y)^{\otimes 2, ij}}{t^{d/2+2}}\right)$$ and $$\left(\frac{g(y)^{kl}g(x)_{kl}}{2t^{d/2+1}} - \frac{g(x)_{ij}(\overline x-y^i)^{\otimes ij}}{t^{d/2+2}}\right)$$ come from the terms where the derivatives hit only $x-y$ and not $g(y)$. Note that all of the terms are Gaussian-type, and all have non-negative order.
	
	Combining the above, we have

	\begin{dgroup*}
		\begin{dmath*}
			\Err f(t,y) = f(t,y) - (\partial_t - \Delta_g)\int_{\intboundsA} \left(k_0(x,y,t)-k_1(x,y,t)-k_2(x,y,t)\right) f(t_0,x)\, dt_0 \, dx
		\end{dmath*}
		\begin{dmath*}
			=\int_{\intboundsA} 
			\left(\frac{\sqrt{g(x)}}{(4\pi)^{d/2}} \frac{g(x)_{ij}g(y)^{ij}-d}{2(t-t_0)^{d/2}}\\
			+\frac{\poly(g,\partial g, \partial\partial g)}{(t-t_0)^{d/2}}
			+ \frac{(x^i-y^i)\poly(g,\partial g,\partial \partial g, x-y)_i}{(t-t_0)^{d/2+1}} 
			+ \frac{(x-y)^{\otimes 3}\poly(g,\partial g, \partial \partial g, x-y)}{(t-t_0)^{d/2+2}}
			\right) \gaussian xy{0}{} f(t_0,x) \\
			+ \upsilon(d(x,\partial \Omega)) \left(\frac{\sqrt{g(x)}}{(4\pi)^{d/2}} \frac{g(x)_{ij}g(y)^{ij}-d}{2(t-t_0)^{d/2}}
			+ \frac{\poly(g,\partial g, \partial\partial g)}{(t-t_0)^{d/2}}\\
			+ \frac{(\overline x^i-y^i)\poly(g,\partial g,\partial \partial g, \overline x-y)_i}{(t-t_0)^{d/2+1}} \\
			+ \frac{(\overline x-y)^{\otimes 3}\poly(g,\partial g, \partial \partial g, \overline x-y)}{(t-t_0)^{d/2+2}}
			\right)\gaussian xy{0}{} f(t_0,x)\\
			+(\partial_t - \Delta_g) k_2(x,y,t) f(t_0, x)		
			\, dt_0 \, dx
		\end{dmath*}
	\end{dgroup*}
	In the line above, we used that $$\int_{\intboundsA} \delta(x-y, t-t_0)f(t_0, x) dx\, dt_0=f(t,y)$$ and $$\int_\intboundsA \delta(\overline x-y, t-t_0)f(t_0,x)=0.$$ The second equality holds because the delta function can only be nonzero when $x$ and $y$ lie on $\partial \Omega$, but $f$ vanishes on $\partial \Omega$.
	
	We define $e(x,y,t)$ to be the integral kernel for $\Err$ appearing above, so that $$\Err f(y,t) = \int_{\intboundsA} e(x,y,t-t_0) f(t_0, x) \, dt_0\, dx.$$ Now we claim that each term appearing in $e(x,y,t)$ is bounded in absolute value by a Gaussian-type of non-negative order. For the term $$\frac{\sqrt{g(x)}}{(4\pi)^{d/2}} \frac{g(x)_{ij}g(y)^{ij}-d}{2(t-t_0)^{d/2}}\simplegaussian,$$ observe that $g(x)_{ij}g(y)^{ij}-d$ vanishes at $x=y$. Since the partial derivatives of $g$ are bounded, $g(x)_{ij}g(y)^{ij}-d=O(x-y)$ so the term has non-negative order. We can use the same trick to show that the other term involving $g(x)_{ij}g(y)^{ij}$ has non-negative order:
	\begin{align*}
		\MoveEqLeft[10] \abs{\frac{\sqrt{g(x)}}{(4\pi)^{d/2}} \frac{g(x)_{ij}g(y)^{ij}-d}{2(t-t_0)^{d/2}}\simplegaussianbar}\\
		&= \abs{\frac{\sqrt{g(x)}}{(4\pi)^{d/2}} \frac{O(x-y)} {2(t-t_0)^{d/2}}\simplegaussianbar}\\
		&\leq \abs{\frac{\sqrt{g(x)}}{(4\pi)^{d/2}} \frac{O(\overline x-y)} {2(t-t_0)^{d/2}}\simplegaussianbar}
	\end{align*}
	In the last line, we used that $\abs{x-y} < c \abs{\overline x -y}$ for some constant $c$ independent of $x$ and $y$. Finally, we apply \cref{prop:diff} to the term involving $k_2$. Since we constructed $k_2$ to have order 2, $(\partial_t - \Delta_g) k_2(x,y,t)$ is Gaussian-type of order 0 as desired. Let $F$ be an upper bound for $f$ on $\Omega \times [0,t]$. By \cref{prop:subcriticalint}, we have
	\begin{align*}
		\abs{\Err f(t,y)}\leq \frac {CF} {\sqrt t}
	\end{align*}
	for some constant $C$ independent of $y$ and $t$.

	Now we are equipped to handle the more complicated analysis of $\Ko\Err^n$. For notational simplicity, set $x_{n+1}=y$. We have
	\newcommand{\intboundsB}{\substack{0\leq t_0\leq \hdots\leq t_n\leq t\\ x_0,..,x_n \in \Omega}}
	\newcommand{\dxs}{dx_0\,\dots\,dx_n}
	\newcommand{\dts}{dt_0\,\dots\,dt_n}
	\newcommand{\drs}{dr_0\,\dots\,dr_n}
	\newcommand{\deltax}[1]{{x_{\alpha}}^{#1}-{x_{\alpha+1}}^{#1}}
	\newcommand{\deltat}{{t_{\alpha+1}}-t_{\alpha}}
	\begin{align}
		\begin{split}
		|\Ko\Err^n f(t,y)| =& \bigg|
		\int_\intboundsB
		\kapprox(x_n,y,t-t_n)
		\left(
		\prod_{\alpha=0}^{n-1}
		e(x_\alpha, x_{\alpha+1}, t_{\alpha+1}-t_\alpha)
		\right)\nonumber\\
		&f(t_0, x_0)
		\, \dxs \, \dts
		\bigg|
		\end{split}\label{eqn:ken}\\
		\leq& \int_{0\leq t_0 \leq \hdots \leq t_n \leq t} C_1 \left(\prod_{\alpha=0}^{n}\frac{C_2}{\sqrt {t_{\alpha+1}-t_\alpha}} \right) F \dts\\
		\leq& \frac {C^n}{\sqrt {n!}}\label{eqn:ken2}
	\end{align}
	for a large enough constant $C$ possibly depending on $d$, $g$, $f$, and $t$, but not on $n$. In \cref{eqn:ken}, we used \cref{prop:subcriticalint} on both $\kapprox$ and $e$. In \cref{eqn:ken2}, we used \cref{prop:sqrtestimate}. It follows that the formal series \cref{eqn:heatseries} converges uniformly and is indeed the heat kernel operator. The Dirichlet boundary condition is satisfied since $\kapprox(x,y,t)$ vanishes whenever $y\in \partial \Omega$.

	Finally, we need to check that the derivatives of $\Ko(1+\Err + \Err^2 \dots)$ with respect to variations of $g$ exist. Recall from \cref{prop:subcriticalint} that differentiating a Gaussian-type function with respect to $g$ does not change its order. Therefore, the derivatives of $\Ko\Err^n f$ with respect to $g$ have the same form as in \cref{eqn:ken} except that $\kapprox$ and $e$ are replaced with new Gaussian-type functions of the same order. Therefore, a similar bound applies and the series for the derivatives of $\Ko(1+\Err+\Err^2 \dots)$ converges. The corresponding integral kernels are the desired derivatives of the heat kernel with respect to variations of $g$.
\end{proof}

	\end{subsection}
	\begin{subsection}{Assorted notation}
		Given points $x,y$ on the universal cover of a leaf $\lambda$, let $d(x,y)$ denote the leafwise Riemannian distance between $x$ and $y$ in $\ucover \lambda$. We use $\star_2$ to mean the leafwise Hodge star operator acting on $\Lambda^*(T^*F)$.
	\end{subsection}

\end{section}

\begin{section}{Existence of transverse Reeb flows}\label{sec:almostharmonic}
	In this section, we prove \cref{thm:main}. For the rest of this section, we fix a closed, oriented 3-manifold $M$ with a co-oriented $C^2$ foliation $\F$. As discussed in \cref{sec:prel1}, we can assume that $\F$ is $C^{\infty,2}$. We also fix a Riemannian metric on $M$.

	\plabel{fix:9.1}{First we explain the claim from the introduction that an invariant transverse measure is an obstruction to a transverse Reeb flow.} Suppose $\F$ supports an invariant transverse measure. As discussed in \cite{ruelle_currents_1975}, $M$ then has a non-trivial closed 2-current $\Sigma$ carried by $\F$. Suppose that $\alpha$ is a contact form on $M$ with Reeb flow transverse to $\F$. Then $d\alpha >0$ on $T\F$. Using Stokes' theorem for 2-currents (see for example \cite{dinh_introduction_2005} or \cite{sullivan_cycles_1976}), we then have $$0 > \int_\Sigma d\alpha =\int_{\partial \Sigma} \alpha = 0.$$
	This is the desired contradiction.

	For the rest of the section, we consider the case that $\F$ supports no transverse invariant measure. In particular, this implies that $\F$ is taut. \plabel{fix:strategy}{Our plan is to start with a choice of $C^{\infty,1}$ transverse measure $\tau$, and then use a diffusion operator convert $\tau$ into a log superhamonic transverse measure. The diffusion operator depends on our choice of background Riemannian metric. Finally, we will give a recipe to write down a linearly perturbation of a log superharmonic transverse measure into a contact structure with Reeb flow transverse to $\F$.}



	\begin{subsection}{Structure theorem for \texorpdfstring{$C^2$}{C2} foliations}
		Deroin and Kleptsyn gave a precise picture of the long term dynamics of leafwise Brownian motion and the holonomy along such paths. We state their main theorem here for reference:
		\begin{thm}[\cite{deroin_random_2007}]\label{thm:structure}
			Let $\F$ be a $C^1$ foliation of a closed 3-manifold $M$. Then either $\F$ supports an invariant transverse measure, or $\F$ has a finite number of minimal sets $\mathcal M_1,\dots,\mathcal M_k$ equipped with probability measures $\mu_1,\dots,\mu_k$, and there exists a real $\kappa > 0$ such that:
			\begin{enumerate}
				\item \textbf{Contraction.} For every point $x\in M$ and almost every leafwise Brownian path $\gamma$ starting at $x$, there is an orthogonal transversal $I_\gamma$ at $x$ and a constant $C_\lambda > 0$, such that for every $t>0$, the holonomy map $h_{\gamma|_{[0,t]}}$ is defined on $I_\gamma$ and $$|h_{\gamma|_{[0,t]}}(I_\gamma)| \leq C_\gamma\exp(-\kappa t)$$
				\item \textbf{Distribution.} For every point $x\in M$ and almost every leafwise Brownian path $\gamma$ starting at $x$, the path $\gamma$ tends to one of the minimal sets, $\mathcal M_j$, and is distributed with respect to $\mu_j$, in the sense that $$\lim_{t\to \infty} \frac 1 t \gamma_* \leb_{[0,t]} = \mu_j$$ where $\leb_{[0,t]}$ is the standard Lebesgue measure on $[0,t]$.
				\item \textbf{Attraction.} The probability $p_j(x)$ that a leafwise Brownian path starting at a point $x$ of $M$ tends to $\mathcal M_j$ is a continuous leafwise harmonic function (which equals 1 on $\mathcal M_j$).
				\item \textbf{Diffusion.} When $t$ goes to infinity, the diffusions $D^t$ of a continuous function $f:M\to \R$ converge uniformly to the function $\sum_j c_jp_j$, where $c_j=\int f \, d\mu_j$. In particular, the functions $p_j$ form a base in the space of continuous leafwise harmonic functions.
			\end{enumerate}
		\end{thm}
		\begin{rmk}
			It might be surprising that holonomy is contracting in almost every direction. It is instructive to verify \cref{thm:structure} for the foliation in \cref{ex:3}. A point undertaking Brownian motion in a pair of pants in $\F$ has three options for a cuff through which to exit. Two of these options have contracting holonomy and one has expanding holonomy. Therefore, with overwhelming probability, holonomy along leafwise Brownian path is exponentially contracting. In this case, there is only one minimal set $\mathcal M_1=M$. The ergodicity statement reduces to the ergodicity of the associated dynamical system on the $S^1$ factor transverse to the leaves of $M^\circ$ defined by
			\begin{equation*}
				z \mapsto
				\begin{cases}
					\sqrt z, & \text{with probability 1/2} \\
					-\sqrt z, & \text{with probability 1/2}
				\end{cases}
			\end{equation*}
			where $S^1$ is identified with the unit circle in $\C$.
		\end{rmk}
		\begin{rmk}
			The non-existence of an invariant transverse measure is equivalent to an isoperimetric inequality for subsurfaces of leaves, i.e. the existence of a Cheeger constant $h>0$ such that $\frac{\abs{\partial S}} {\abs{S}} \geq h$ for any compact subsurface $S$ of the leaves of $\F$ \plabel{fix:10.1}{\cite{goodman_holonomy_1979}, \cite[Example 7.6]{calegari_foliations_2007}}. Therefore, the theorem above may be regarded as an analogue of the Cheeger inequality for foliations: if $\F$ has a nonzero Cheeger constant, then leafwise random walks converge quickly to a stationary distribution.
		\end{rmk}

		Our foliations are $C^2$, so we can upgrade the contraction result to an infinitesimal version:
		\begin{prop}\label{prop:contraction}
			Let $\F$ be a $C^2$ foliation of a closed 3-manifold $M$. Suppose further that $\F$ does not support an invariant transverse measure. Then there exists a real number $\kappa > 0$ such that for every point $x\in M$ and almost every leafwise Brownian path $\gamma$ starting at $x$, there is a constant $C_\gamma> 0$ such that for every time $t>0$, the holonomy map $h_{\gamma|_{[0,t]}}$ satisfies 
			\begin{equation}\label{eqn:contractionbound}
				\abs{h_{\gamma|_{[0,t]}}'} \leq C_\gamma\exp(-\kappa t).
			\end{equation}
			Moreover, $C_\gamma$ can be chosen to be a Wiener measurable function of $\gamma$.
		\end{prop}
		\begin{rmk}
			We don't need to specify a transverse measure for the norm $\abs{\cdot}$ since $C_\gamma$ can absorb constant factors.
		\end{rmk}
		\begin{proof}
			Given an orientation preserving homeomorphism $g$ between two intervals $I_0$ and $I_1$, we define the distortion of $g$ by$$\distort(g)=\sup_{a,b,c,d \in I_0}
			\frac {a-b}{c-d} \bigg/ \frac {g(a)-g(b)}{g(c)-g(d)}.
			$$
			The distortion is equal to 1 if and only if $g$ is linear. It is submultiplicative with respect to composition of functions. If $g$ is $C^2$, then there is a bound on the distortion in terms of $g''$:
			\begin{align}
				\distort(g) &= \frac{\sup_{x\in I_0} g'(x)}{\inf_{x\in I_0} g'(x)}\\
				&\leq 1 + \frac{\abs{I_0}\sup_{x\in I_0}{\abs{g''(x)}}}{\inf_{x\in I_0} \abs {g'(x)}}\label{eqn:ineq}
			\end{align}

			Let $\varepsilon$ be small enough that holonomy maps over leafwise paths of length $\leq 1$ are defined on all orthogonal transversals of length $\leq \varepsilon$. Let $A$ be an upper bound for the quotient $\frac{\sup_{x\in I_0} \abs{g''(x)}} {\inf_{x\in I_0} \abs{g'(x)}}$ over all holonomy maps $g$ for orthogonal transversals of length $\leq \varepsilon$ along paths of length $\leq 1$.

			Let us now consider a leafwise Brownian path $\gamma$ starting at a point $x\in M$. By \cref{thm:structure}, it is \plabel{fix:11.1.1}{almost surely possible} to choose a short orthogonal transversal $I_\gamma$ through $x$ such that holonomy of $I_\gamma$ along $\gamma$ exists for all time, and moreover that there exists a constant $E_\gamma$ permitting the inequality
			\begin{equation}
				\abs{h_{\gamma\mid_{[0,t]}}(I_\gamma)} < E_\gamma\cdot \exp(-\kappa t)
			\end{equation}
			for all $t>0$. \plabel{fix:11.1.2}{Shortening $I_\gamma$ if necessary, we may further assume that} 
			\begin{equation}\label{eqn:Ismall}
				\abs{h_{\gamma\mid_{[0,t]}}(I_\gamma)} < \varepsilon
			\end{equation}
			for all $t>0$.

			Break the holonomy $h_{\gamma|_{[0,t]}}$ into a composition
			$$h_{\gamma|_{[0,t]}} = h_{\gamma|_{[\floor{t},t]}}\circ \dots \circ h_{\gamma|_{[1,2]}}\circ h_{\gamma|_{[0,1]}}.$$ Let $V_i$ be the Riemannian leafwise distance between $\gamma(i)$ and $\gamma(i+1)$. \plabel{fix:11.3}{Since $V_i$ has bounded moments (in particular, bounded second moment), there exists a constant $c$ depending only on the foliation such that} $Pr[V_i > i] \leq \frac c {i^2}$. Thus, 
			\begin{align}
				\sum_i \Pr[V_i > i] &< \sum_i \frac {c}{i^2}\\
				&< \infty
			\end{align}
			It follows from the Borel-Cantelli lemma that with probability one, all but finitely many of the $V_i$ satisfy $V_i<i$. So there exists $i_0$ depending on $\gamma$ so that $V_i < i$ for all $i > i_0$. 

			\plabel{fix:11.2}{The following sequence of inequalities holds with probability one.}
			\begin{align}
				\distort(h_{\gamma|_{[0,t]}}) &< \prod_{i=0}^{\floor t} \distort(h_{\gamma|_{[i,\min(i+1,t)]}})\label{eqn:submul}\\
				&< \prod_{i=0}^{\floor t} \paren{1 + A\abs{h_{\gamma\mid_{[0,i]}}(I_\gamma)}}^{\ceil{V_i}} \label{eqn:l2}\\
				&< \prod_{i=0}^{\floor t} (1 + A\cdot E_\gamma \exp(-\kappa i))^{\ceil{V_i}}\label{eqn:l3}\\
				&< \left(\prod_{i=0}^{i_0} (1 + A\cdot E_\gamma \exp(-\kappa i))^{\ceil{V_i}} \right)\\
				&\qquad \left(\prod_{i=i_0+1}^{\floor t} (1 + A\cdot E_\gamma \exp(-\kappa i))^i \right)\\
				&<B_\gamma
			\end{align}
			where $B_\gamma$ is a constant which can be chosen independent of $t$. In \cref{eqn:submul} we invoked submultiplicativity of distortion. In \cref{eqn:l2}, we used the fact that $\gamma|_{[i,i+1]}$ is homotopic rel. endpoints to the concatenation of at most $\ceil{V_i}$ paths of length at most 1 and invoked \cref{eqn:ineq}. We also used \cref{eqn:Ismall} to guarantee that the bound involving $A$ applies.

		 Now we can estimate $h'_{\gamma|_{[0,t]}}$ using our bound on the distortion of $h_{\gamma|_{[0,t]}}$ combined  with the macroscopic bound given by \cref{thm:structure}:
			\begin{align*}
				\abs {h'_{\gamma|_{[0,t]}}} &< B_\gamma \abs{h_{\gamma|_{[0,t]}}(I_\gamma)}\\
				&< B_\gamma \cdot E_\gamma \exp(-\kappa t)
			\end{align*}
			So we may take the constant in \cref{eqn:contractionbound} to be $B_\gamma E_\gamma$.

		\plabel{fix:16.1}{It remains to check that we can make the choice of $C_\gamma$ a Wiener measurable function of $\gamma$.} Define the $\delta$ neighbourhood of $\gamma$, denoted $N_\delta(\gamma)$, to be the set of paths $$N_\delta(\gamma)=\set{\gamma_1 \mid \forall t:\, d(\gamma_1(t),\gamma(t)) < \delta}.$$

		Set $\delta = \min(1,R/10)$ where $R$ is a lower bound for the injectivity radius of leaves of $\F$. Then for any $\gamma_1 \in N_\delta(\gamma)$, $\gamma_1$ is homotopic rel endpoints to a concatenation $\gamma_2 \circ \gamma$ where $\gamma_2$ has length less than 1. For any $t>0$, if $$\abs{h_{\gamma|_{[0,t]}}'} \leq C_\gamma\exp(-\kappa t),$$ then for any $\gamma_1\in N_\delta(\gamma)$ we have 
		\begin{align}
			\abs{h_{\gamma_1|_{[0,t]}}'} &\leq \abs{h_{\gamma_2}'}\abs{h_{\gamma|_{[0,t]}}'}\\
			&\leq AB_\gamma E_\gamma\exp(-\kappa t)
		\end{align}
		Therefore, $A B_\gamma E_\gamma$ is a uniform choice for $C_{\gamma_1}$ that makes \cref{eqn:contractionbound} work for any $\gamma_1 \in N_\delta(\gamma)$.

		Let $\Gamma$ be a countable set of paths such that the sets $N_\delta(\gamma)$ for $\gamma\in\Gamma$ constitute a cover of the path space. Observe that $N_\delta(\gamma)$ is a Wiener-measurable set. Define $$C_{\gamma_1} = \sup_{\substack{ \gamma\in \Gamma\\\gamma_1 \in N_\delta(\gamma)}} A B_\gamma E_\gamma.$$ This choice of $C_{\gamma_1}$ satisfies \cref{eqn:contractionbound} and is Wiener measurable as a countable supremum of Wiener measurable functions.
		\end{proof}
	\end{subsection}

	\begin{subsection}{Logarithmic diffusion}
		In this subsection, $x$ will be a point in $M$ and $\lambda$ will be the leaf containing $x$. \plabel{fix:8.2.2}{Since we assumed $\F$ to be taut and not equal to the foliation of $S^1\times S^2$ by spheres, the universal cover $\widetilde \lambda$ is a properly embedded plane in $\widetilde M$.} We will abbreviate $f_{\tau,\lambda,b}$ to $f_\tau$ when there is no danger of ambiguity. Recall that $W_x$ is defined to be the distribution of leafwise Brownian paths starting at a point $x$. Given a time $T>0$, we define a diffusion operator $\DD_T$ acting on $C^{\infty,1}$ transverse measures of full support by
		\begin{align*}
			\DD_T(\tau)|_x&=\exp\paren{\Egamma{ \log\abs{h'_{\gammaT T}}_\tau}}\tau|_x
		\end{align*}
		Here, $\E_{\gamma\sim W_x}$ means that we are taking the expectation over paths $\gamma$ sampled from $W_x$. Also recall that $\abs{h'_{\gammaT T}}$ is the infinitesimal holonomy along $\gamma$ from $\gamma(0)$ to $\gamma(T)$.

		In other words, the $\DD_T(\tau)$ length of an infinitesimal transverse arc through $x$ is the geometric mean of the $\tau$ lengths obtained by holonomy transport along time $T$ leafwise random walks. For an individual leaf $\lambda$, the function $\log f_{\tau,\lambda,b}$ on $\ucover \lambda$ evolves according to the standard heat flow. If the geometric mean were to be replaced with an arithmetic mean, we would obtain the leafwise heat flow defined by Garnett~\cite{garnett_foliations_1983}. Unlike the heat flow operator, our diffusion operator does not conserve mass. Its advantage is that it gives greater weight to the well-behaved smaller holonomies, and therefore allows us to prove a quantitative convergence result.

		The main result of this subsection is \cref{prop:superharmonic}, which asserts that at some large but finite time $T$, the Radon-Nikodym derivatives of the diffused transverse measure are exponentials of strictly superharmonic functions. This implies, for example, that the distance to a nearby leaf, as measured by the diffused transverse measure, never has local minima. 

		As written, it is hard to prove any transverse regularity for the diffused measure. One cannot compare the diffused transverse measure at two nearby points on distinct leaves because holonomy of a transversal connecting these two points typically blows up in finite time along some long paths. We resolve this by introducing a cutoff function $\phi_{R,S}$. \plabel{fix:12.1}{We set}
		$$\DD_{T,R,S}(\tau)|_x=\exp\paren{\Egamma{ \log\abs{h'_{\gammaT T}}_\tau \phi_{R,S}\paren{x,\gamma(t_{\far})}}}\tau|_x$$

		where $t_{\far} \in [0,T]$ is the time minimizing $\phi_{R,S}(x,\gamma(t_\far))$ and $\phi_{R,S}(x,-)$ is a smooth cutoff function defined on $\ucover \lambda$ satisfying the following conditions:
		\begin{itemize}
		\item $\phi_{R,S}(x,y)=1$ when $d(x,y)<R$
		\item $\phi_{R,S}(x,y)=0$ when $d(x,y)>SR$
		\item $\phi$ has all derivatives bounded in absolute value by $\frac {10} {S}$
		\item The superlevel sets of $\phi(x,-)$ on $\ucover \lambda$ are topological disks.
		\end{itemize}

		The reader may now proceed to the proof of \cref{prop:superharmonic} and refer to the technical lemmas below as needed.

		\begin{lem}\label{lem:smooth}
			If $\tau$ is a $C^{\infty,1}$ transverse measure, then $\DD_{T,R,S}(\tau)$ is a $C^{\infty,1}$ transverse measure for all $T,R,S$.
		\end{lem}
		\begin{proof}
			This is where the cutoff comes in handy. Let $x$ be a point on a leaf $\lambda$. In $\ucover M$, choose a neighbourhood $U$ of the radius $SR$ disk in $\ucover \lambda$ centred at $x$ which is skinny enough in the transverse direction that it is foliated as a product. In a neighbourhood of $x$, the values of $\DD_{T,R,S}(\tau)$ depend only on information in $U$. If we were to ignore the dependence of $t_\far$ on $\gamma$, then smoothness of $\DD_{T,R,S}(\tau)$ would follow from standard results regarding the smoothness of heat kernels with respect to compact variations of the metric. The dependence of $t_\far$ on $\gamma$ can indeed be trivialized, as we now detail.

			Suppose $x(\eps)$ is a smooth path in $U$ with $x(0)=x$. To show that $\DD_{T,R,S}(\tau)$ is once differentiable at $x$, we need to show that $$\frac{\partial}{\partial \eps} \left(\DD_{T,R,S}(\tau)(v)\mid_{x(\eps)}\right)$$ exists for any smooth vector field $v$.

			Let $\ucover \lambda_\eps\subset U$ be the leaf containing $x(\eps)$. \plabel{fix:13.1}{Let $\psi_\eps:\R^2 \to \ucover \lambda_\eps$} be a smooth family of diffeomorphisms parameterized by $\eps$ such that $\psi_\eps(0)=x(\eps)$ and that $\psi_\eps^*(\phi(x(\eps),-))$ is a standard, rotationally symmetric function on $\R^2$ independent of $\eps$. Call this standard cutoff function $\phi_0:\R^2\to \R$. We suppress the parameters $R$ and $S$ in this notation since we can take them as constant in the proof of this lemma.

			Let
			\begin{align*}
				g_\eps&=\psi_\eps^*(g)\\
				f^\eps_{\tau}&=\psi_\eps^*(f_{\tau,\lambda_\eps,x(\eps)})
			\end{align*}
			where $g$ is the Riemannian metric on $\ucover M$. With this definition, $f_\tau^\eps$ is a $C^{\infty,1}$ function on $\R^2\oplus \R$, where the $\R$ factor is parameterized by $\eps$. Now we may write
			\begin{align}
				\DD_{T,R,S}(\tau)(v)\mid_{x(\eps)}&=\exp\paren{\E_{\gamma\sim W_{x(\eps)}}\bracket{ \log\abs{h'_{\gammaT T}}_\tau\phi_{R,S}\paren{x,\gamma(t_{\far})}}}\tau(v)|_{x(\eps)}\\
				&=\exp\paren{\E_{\gamma\sim W_{x(\eps)}}\bracket{ \log f_{\tau,\lambda_\eps,x(\eps)}(\gamma(T)) \phi_{R,S}\paren{x,\gamma(t_{\far})}}}\tau(v)|_{x(\eps)}\\
				&=\exp\paren{\int_0^1 \E_{\gamma\sim W_{x(\eps)}}\bracket{\log f_{\tau,\lambda_\eps,x(\eps)}(\gamma(T)) \,\big\vert \, \phi_{R,S}\paren{x,\gamma(t_{\far})} > a  } da}\tau(v)|_{x(\eps)}\\
				&=\exp\paren{\int_0^1 \int_{\Omega(a)} k_{\eps,\Omega(a)}(0,x,T)\log f_\tau^\eps(y) \, dx\,da}\tau(v)|_{x(\eps)}\label{eqn:tmp1}
			\end{align}

			where $\Omega(a)$ is the compact disk $\set{x\in \R^2 \mid \phi_0(x) \geq a}$ and $$k_{\eps, \Omega(a)}(x,y,t):\Omega(a) \times \Omega(a) \times (0,\infty) \to \R$$ is the heat kernel on $\Omega(a)$ for the metric $g_\eps$ with zero boundary condition. By \cref{prop:perturbkernel}, the partial derivative of $k_{\varepsilon, \Omega(a)}(0,x,T)$ with respect to $\eps$ exists and is continuous because $\frac{\partial g}{\partial \eps}$ exists and is continuous. Moreover, by another application of \cref{prop:perturbkernel}, $\frac{\partial}{\partial \eps} k_{\varepsilon, \Omega(a)}(0,x,T)$ varies continuously as we vary $a$. Thus, the partial derivative of \cref{eqn:tmp1} with respect to $\eps$ exists and is continuous.

			A similar argument for higher partial derivatives shows that $\DD_{T,R,S}(\tau)$ has as much regularity as do $g_\eps$ and $f^\eps_\tau$, which is to say $C^{\infty,1}$.

		\end{proof}

		\begin{lem}\label{lem:cutoff}
			For any fixed $T,S$,
			\begin{align}
				\MoveEqLeft \lim_{R\to \infty} \Egamma{ \log f_\tau(\gamma(T))\phi_{R,S}(x,\gamma(\tfar))} \label{eqn:exp1}\\
				&= \lim_{R\to \infty} \Egamma{ \log f_\tau(\gamma(T))\phi_{R,S}(x,\gamma(T))} \label{eqn:exp2}\\
				&=\Egamma{\log f_\tau(\gamma(T))} \label{eqn:exp3}
			\end{align}

			and the convergence for either limit is uniform over $x \in M$ and over $S$. In other words, tail events from abnormally long paths do not contribute much to the expectation. In particular, the right side exists and is continuous.
		\end{lem}
		\begin{proof}
			Say $\gamma$ is a tail event if $d(x,\gamma(t_{far}))>R$. In other words, $\gamma$ is a tail event if $\gamma$ ever travels a distance $R$ away from $x$ before time $T$. The expectations in \cref{eqn:exp1,eqn:exp2,eqn:exp3} differ only on tail events. In order to quantify the contribution of tail events to the expectations, we need to bound both how fast $\log f$ grows on $\ucover\lambda$ and how fast Brownian motion can travel on $\ucover\lambda$. Since $d\log f_{\tau}$ is continuous on $M$, it is bounded above in norm. Therefore, $\log f_{\tau}$ is $L$-Lipschitz on $\ucover\lambda$ for some $L$. In particular, 
			\begin{equation}\label{eqn:growthlimit}
				\abs{\log f_\tau(\gamma(T))}<L\,d(x,\gamma(T))+C
			\end{equation}
			for some constant $C$. 
			
			Let $K$ be a global upper bound for the absolute value of the Gaussian curvature of leaves in $M$. \plabel{fix:15.1}{By the heat kernel estimates from \cite{cheng_upper_1981} or \cite[Theorem B.7.1]{candel_foliations_2000}, we have}
			\begin{equation}\label{eqn:speedlimit}
				\Pr\bracket{d(x,\gamma(\tfar)) > r}< C_0\exp\paren{-\frac {c_0r} {T\sqrt K}}
			\end{equation}
			for some absolute constants $c_0$ and $C_0$.
			
			Note that
			\begin{equation}
			\Pr\bracket{d(x,\gamma(T)) > r} < \Pr\bracket{d(x,\gamma(\tfar)) > r},
			\end{equation}
			so $\Pr\bracket{d(x,\gamma(T)) > r}$ enjoys the same upper bound as in \cref{eqn:speedlimit}.

			Combining \cref{eqn:growthlimit} and \cref{eqn:speedlimit}, the maximum possible contribution of tail events to any of \cref{eqn:exp1,eqn:exp2,eqn:exp3} is bounded above by
			\begin{align}
				\int_R^{\infty} (Lr + C)C_0\exp\paren{- \frac{c_0 r}{T\sqrt K}}\,dr
			\end{align}
			where $L$, $C$, and $K$ are constants depending only on $M$, $\F$, and the metric. This integral converges to zero as $R\to \infty$.
		\end{proof}
		\begin{rmk}
			Here is a heuristic explanation for the bound \cref{eqn:speedlimit}. Brownian motion travels at the usual square root speed on length scales below $\frac 1 {\sqrt K}$, and in negative curvature travels at linear speed at length scales above $\frac 1 {\sqrt K}$. This is because a random walk is exponentially unlikely to backtrack in the presence of negative curvature. A concentration result for the linear speed gives the desired bound.
			\end{rmk}

		In what follows, $\Delta$ denotes the leafwise Laplace operator.
		\begin{lem}\label{lem:derivatives}
			For fixed $T$,
			$$\lim_{R,S\to \infty} \Egamma{\Delta (\log f_\tau(-)\phi(x,-))(\gamma(T))} = \Egamma{\Delta (\log f_\tau)(\gamma(T))}
			$$

			and the convergence is uniform over $M$. In particular, the right side exists and is continuous.
		\end{lem}
		\begin{proof}
			This follows from an argument parallel to that in the proof of \cref{lem:cutoff}. \plabel{fix:15.2}{We just need to check that $\Delta (\log f_\tau(-)\phi(x,-))$ grows slowly enough on $\ucover \lambda$.} We have
			\begin{align*}
				\Delta (\log f_\tau(-)\phi(x,-)) &= (\Delta \log f_\tau) \phi(x,-) + \langle d\log f_\tau, d\phi(x,-)\rangle + \log f_\tau \Delta \phi(x,-)
			\end{align*}
			The terms $\Delta \log f_\tau$ and $d\log f_\tau$ are both bounded because they descend to functions on $M$. The terms $d\phi(x,-)$ and $\Delta \phi(x,-)$ are bounded; in fact, they tend to zero as $S\to \infty$ as arranged in the construction of $\phi$. Finally, $\log f_\tau$ is Lipschitz as noted in the proof of \cref{lem:cutoff}. Therefore, we have a bound of the form 
			\begin{align*}
				\Delta (\log f_\tau(-)\phi(x,-)) (\gamma(T)) <L\,d(x,\gamma(T))+C
			\end{align*}
			for some constants $L$ and $C$ depending only on $M$, $\F$, and the Riemannian metric. The rest of the proof carries through as in \cref{lem:cutoff}.
		\end{proof}

		It will be necessary to understand how $\log f_\tau(x)$ behaves as $x$ undergoes Brownian motion. It\^o's lemma gives the answer:

		\begin{lem}[It\^o's lemma]\label{lem:ito}
			Suppose $x$ evolves according to Brownian motion on a Riemannian manifold $\lambda$ with diffusion rate $\sigma$. Then for a differentiable real valued function $g$ on $\lambda$, $g(x)$ follows a drift-diffusion process with diffusion rate $\sigma \abs{\grad g}$ and drift rate $\frac {\sigma^2}{2}\Delta g$.
		\end{lem}

		\plabel{fix:15.3}{For the reader unfamiliar with the language of It\^o calculus, we provide a restatement in our setting for the case $\sigma=1$:}

		\begin{lem}[It\^o's lemma, reformulated]\label{lem:ito2}
			If $g$ is a real valued $C^2$ function on a Riemannian manifold $\lambda$ and $x\in \lambda$, then for any $y\in \lambda$ and $t_0>0$ we have
			\begin{equation*}
				\lim_{t\to t_0^+} \frac{\Var_{\gamma \sim W_x}[g(\gamma(t)) \mid \gamma(t_0)=y]}{t-t_0} = \abs{\nabla g (y)}
			\end{equation*}
			\begin{equation*}
			\frac{d}{dt} \Egamma{g(\gamma(t))\mid \gamma(t_0)=y}\Bigg|_{t=t_0} = \frac 1 2\Delta g(y)
			\end{equation*}
		\end{lem}

		For more discussion of It\^o's lemma and drift-diffusion process, we point the reader to \cite[Chapter 7]{morters_brownian_2010}.

		\begin{lem}\label{lem:lapint}
			There exists $\kappa > 0$ such that for each minimal set $\mathcal M_i$,
			\begin{equation}\label{eqn:lapint}
				\int_{\mathcal M_i} \Delta \log f_\tau \, d\mu_i < -\kappa
			\end{equation}
			where $\mu_i$ is the probability measure on $\mathcal M_i$.
		\end{lem}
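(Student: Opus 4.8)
The plan is to recognize the integral in \cref{eqn:lapint} as (a fixed positive multiple of) the asymptotic exponential rate at which holonomy contracts along leafwise Brownian paths that live inside $\mathcal{M}_i$, and then to read off its sign and a uniform bound directly from \cref{prop:contraction}. The conceptual point driving the proof is that $\log f_\tau$ is \emph{not} a globally defined function on $M$: only its leafwise differential $d\log f_\tau$ descends, and the failure of $\log f_\tau$ to descend — its monodromy around leafwise loops — is precisely the holonomy. Consequently $\int_{\mathcal{M}_i}\Delta\log f_\tau\,d\mu_i$ need not vanish even though $\mu_i$ is a harmonic measure; what it computes is exactly this monodromy defect.

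Concretely, the key identity is the cocycle relation: for any path $\gamma$ in a leaf $\ucover\lambda$,
\begin{equation*}
	\log f_\tau(\gamma(t)) - \log f_\tau(\gamma(0)) = \log\abs{h'_{\gamma|_{[0,t]}}}_\tau ,
\end{equation*}
so although each side is individually ambiguous up to an additive basepoint constant, their common value is a genuine path functional. I would then fix a minimal set $\mathcal{M}_i$, which is saturated, so a leafwise Brownian path $\gamma$ started at a point $x\in\mathcal{M}_i$ stays in $\mathcal{M}_i$ for all time. Applying It\^o's lemma (\cref{lem:ito}) to $\log f_\tau$ on the leaf through $x$ — legitimate because $\Delta\log f_\tau$ is a well-defined continuous function on $M$ — and substituting the cocycle relation gives
\begin{equation*}
	\log\abs{h'_{\gamma|_{[0,t]}}}_\tau = M_t + c_0\int_0^t \Delta\log f_\tau(\gamma(s))\,ds ,
\end{equation*}
where $c_0>0$ is a fixed constant depending only on the normalization of the leafwise diffusion and $M_t$ is a martingale. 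Since $d\log f_\tau$ is a continuous section of $T^*\F$ over the compact manifold $M$, it is bounded, so the quadratic variation of $M_t$ grows at most linearly in $t$ and hence $M_t/t\to 0$ almost surely. By the Distribution statement of \cref{thm:structure}, almost every such $\gamma$ satisfies $\frac 1 t\gamma_*\leb_{[0,t]}\to\mu_i$ weakly — the limiting minimal set must be $\mathcal{M}_i$ itself since $\gamma$ never leaves it — and since $\Delta\log f_\tau$ is continuous on $M$ this yields $\frac 1 t\int_0^t\Delta\log f_\tau(\gamma(s))\,ds\to\int_{\mathcal{M}_i}\Delta\log f_\tau\,d\mu_i$ almost surely. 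Dividing the displayed identity by $t$ and letting $t\to\infty$ therefore gives, almost surely,
\begin{equation*}
	\lim_{t\to\infty}\frac 1 t\log\abs{h'_{\gamma|_{[0,t]}}}_\tau = c_0\int_{\mathcal{M}_i}\Delta\log f_\tau\,d\mu_i .
\end{equation*}
On the other hand, \cref{prop:contraction} supplies a $\kappa_0>0$, independent of $x$ and of $i$, with $\abs{h'_{\gamma|_{[0,t]}}}\le C_\gamma e^{-\kappa_0 t}$ for almost every $\gamma$; since $\abs{\cdot}_\tau$ and $\abs{\cdot}$ differ only by an absolute constant factor, the left-hand side above is $\le -\kappa_0$. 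Hence $c_0\int_{\mathcal{M}_i}\Delta\log f_\tau\,d\mu_i\le-\kappa_0$, and \cref{eqn:lapint} holds with, e.g., $\kappa=\kappa_0/(2c_0)$, which works simultaneously for all the finitely many minimal sets.

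The main obstacle I anticipate is the analytic bookkeeping around It\^o's formula: for a $C^2$ foliation, $\log f_\tau$ restricted to a leaf may be no better than $C^1$ with continuous Laplacian, so one may need to mollify $\log f_\tau$ along the leaf, apply It\^o to the mollification, and pass to the limit using that $\Delta\log f_\tau$ is continuous and $d\log f_\tau$ is bounded. That same boundedness of $d\log f_\tau$ is what controls the martingale term and forces $M_t/t\to 0$, and continuity of $\Delta\log f_\tau$ on the compact manifold $M$ is what lets the occupation-measure convergence of \cref{thm:structure} be integrated against it. None of these steps is deep, but they are where the $C^2$ hypothesis and the compactness of $M$ genuinely enter.
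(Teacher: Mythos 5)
Your argument is correct, but it takes a genuinely different route from the paper's. You work \emph{pathwise}: the cocycle identity $\log\abs{h'_{\gamma|_{[0,t]}}}_\tau=\log f_\tau(\gamma(t))-\log f_\tau(\gamma(0))$, It\^o's formula, the martingale strong law (quadratic variation at most $L^2t$ since $d\log f_\tau$ is bounded on $M$), and the Distribution statement of \cref{thm:structure} together give the almost-sure limit $\tfrac1t\log\abs{h'_{\gamma|_{[0,t]}}}_\tau\to c_0\int_{\mathcal M_i}\Delta\log f_\tau\,d\mu_i$, and then \cref{prop:contraction} caps this limit by $-\kappa_0$ because $\tfrac1t\log C_\gamma\to 0$ for each fixed path. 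This neatly sidesteps the difficulty the paper flags explicitly — that nothing controls the expectation of $C_\gamma$ — since you never integrate $\log C_\gamma$ over paths. The paper instead works in expectation: it conditions on the events $A_t$ that the contraction bound with a fixed constant $C$ holds up to time $t$ (chosen so $\Pr[A]>0$), derives $\E\bracket{\log f_\tau(\gamma(t))\mid A_{t'}}<-\kappa t+o(t)$ by a conditional-probability bookkeeping argument (which in turn leans on \cref{lem:cutoff} to control the tails of the increments), and compares with the derivative of the conditional expectation computed via It\^o and the \emph{Diffusion} statement of \cref{thm:structure} (uniform convergence of $D^{dt}g$). Your route uses the ergodic (occupation-measure) statement where the paper uses the uniform-diffusion statement, and trades the paper's conditioning machinery for the martingale law of large numbers; both are legitimate, and yours is arguably the cleaner of the two. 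Your worry about leafwise regularity is also moot under the paper's conventions: $C^2$ means $C^{\infty,2}$, so $\log f_\tau$ is smooth along each leaf and It\^o applies without mollification.
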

		\begin{proof}
			The basic idea is that, by It\^o's lemma, the integral on the left side of \cref{eqn:lapint} dictates the average drift rate of $\log f_\tau$ for long Brownian paths in $\mathcal M_i$. This drift rate should be negative in accordance with \cref{thm:structure}. We must take a bit more care because, as written, \cref{thm:structure} doesn't give bounds on the expectation of $C_\gamma$.

			Let $\kappa$ and $C_\gamma$ be the constants from \cref{prop:contraction}. Let $g=\Delta \log f_\tau$. Choose $x\in \mathcal M_i$ and let $\gamma \sim W_x$ where $W_x$ is the distribution of leafwise Brownian paths starting at $x$. Choose $C$ large enough that $\Pr_{\gamma\sim W_x}\bracket{C_\gamma < C} > 0$. Let $A$ be the event that $C_\gamma < C$. Let $A_t$ be the event that for all $s\in [0,t]$, 
			\begin{equation}\label{eqn:Cdef}
				f_\tau(\gamma(s))<C\exp(-\kappa s).
			\end{equation}
			By definition, $A=\bigcap_t A_t$. Since $\Pr\bracket{A}>0$, the conditional probabilities satisfy $$\lim_{\substack{i,j\to \infty\\ i>j}}\Pr[A_i \mid A_j]=1.$$ 

			For any $\eps > 0$, \cref{thm:structure} lets us choose a time $dt$ large enough that the time $dt$ diffused function, $D^{dt}g$, satisfies $$\abs{D^{dt}g-\int_{\mathcal M_i}g\, d\mu_i} < \varepsilon$$ at every point in $\mathcal M_i$. For any time $t$, set $t':=\floor{t-dt}$. The essential properties of $t'$ are that it is (locally) independent of $t$, it is a bounded time in the past, and yet is far enough in the past that Brownian motion has had some time to diffuse.

			Now taking the logarithm of the defining constraint \cref{eqn:Cdef}, we have
			\begin{align}
				-\kappa t + \log C &> \E\bracket{\log f_\tau(\gamma(t))\mid A_t}\\
				\begin{split}
				&=\frac 1 {\Pr\bracket{A_t \mid A_{t'}}} 
					\bigg(
					\E\bracket{\log f_\tau(\gamma(t))\mid A_{t'}}\\
					&\qquad- \E\bracket{\log f_\tau(\gamma(t))\mid \neg A_{t}\cap A_{t'}}\Pr\bracket{\neg A_t \mid A_{t'}} 
					\bigg)
				\end{split}\\
				\begin{split}
				&>\frac 1 {\Pr\bracket{A_t \mid A_{t'}}}
					\bigg(
					\E\bracket{\log f_\tau(\gamma(t))\mid A_{t'}}\\
					&\qquad-\E\bracket{\log f_\tau(\gamma(t))-\log f_\tau(\gamma(t'))+\kappa t' - \log C \mid \neg A_{t}\cap A_{t'}} \\
					&\qquad\cdot \Pr\bracket{\neg A_t \mid A_{t'}}
					\bigg)
				\end{split}\\
				&=\frac {\E\bracket{\log f_\tau(\gamma(t))\mid A_{t'}}}{\Pr\bracket{A_t \mid A_{t'}}} + o(t)\label{eqn:simp}
			\end{align}

			Since $\lim_{t\to \infty}\Pr[A_t\mid A_{t'}]\to 1$, we conclude that \begin{equation}\label{eqn:growth}
			\E\bracket{\log f_\tau(\gamma(t))\mid A_{t'}} < -\kappa t + o(t).
			\end{equation}
			\cref{eqn:simp} requires some justification. By \cref{lem:cutoff}, conditioned on any value of $\gamma(t')$, the distribution of $$\log f_\tau(\gamma(t)) - \log f_\tau(\gamma(t'))$$ has a finite expectation. \plabel{fix:17.1}{Moreover, \cref{lem:cutoff}} gives a uniform bound on the tails of this distribution depending only on $t-t'$. Since $\Pr\bracket{ \neg A_t \mid A_{t'}} \to 0$, the term dropped in \cref{eqn:simp} is negligible as claimed.

			On the other hand, we can compute the growth rate of the left side of \cref{eqn:growth} using It\^o's lemma:
			\begin{align*}
				\frac {d\E\bracket{\log f_\tau(t)\mid A_{t'}}}{dt} &= \E\bracket{g(\gamma(t)) \mid A_{t'}}\\
				&= \E\bracket{D^{t-t'}g(\gamma(t')) \mid A_{t'}}\\
				&> \int_{\mathcal M_i} g \, d\mu_i - \eps\\
			\end{align*}
			Here, it was critical for the application of It\^o's lemma that $A_{t'}$ is a piecewise constant function of $t$ so that $\gamma(t) \mid A_{t'}$ is a diffusion process. Comparing with \cref{eqn:growth}, we conclude
			$$\int_{\mathcal M_i} g\, d\mu_i < -\kappa+\eps.$$ Taking $\eps\to 0$ finishes the proof of the lemma.
		\end{proof}

		Now we are ready to prove the main result of this section.
		\begin{prop}\label{prop:superharmonic}
			If $T$, $R$ and $S$ are chosen sufficiently large, then $\DD_{T,R,S}(\tau)$ is log superharmonic. That is, on each leaf $\ucover \lambda$ of $\ucover \F$, the function $\log f_{\DD_{T,R,S}(\tau),\lambda}$ is strictly superharmonic.
		\end{prop}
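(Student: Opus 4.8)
The plan is to do the computation first for the cutoff-free diffusion $\DD_T(\tau)$, where the sign is transparent, and then to show that reinstating the cutoff changes the leafwise Laplacian by a quantity tending to $0$ uniformly as $R,S\to\infty$.

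First I would unwind the definitions. By the cocycle property of holonomy, $\log\abs{h'_{\gammaT T}}_\tau = \log f_\tau(\gamma(T)) - \log f_\tau(x)$ for any leafwise path $\gamma$ from $x$, so on each $\ucover\lambda$,
\begin{equation*}
	\log f_{\DD_{T,R,S}(\tau),\lambda}(x) = \log f_\tau(x) + \Egamma{\big(\log f_\tau(\gamma(T)) - \log f_\tau(x)\big)\,\phi_{R,S}(x,\gamma(\tfar))},
\end{equation*}
and with the cutoff removed this collapses to $\log f_{\DD_T(\tau),\lambda} = D^T(\log f_\tau)$, the leafwise heat semigroup applied to $\log f_\tau$. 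Since $\F$ is $C^2$, $\Delta\log f_\tau$ is a continuous, hence bounded, function on $M$, so the heat semigroup commutes with $\Delta$ and $\Delta\log f_{\DD_T(\tau)} = D^T(\Delta\log f_\tau)$. Now \cref{lem:lapint} gives $\int_{\mathcal M_i}\Delta\log f_\tau\,d\mu_i < -\kappa$ for every minimal set, while the Diffusion part of \cref{thm:structure} says $D^T(\Delta\log f_\tau)$ converges uniformly on $M$, as $T\to\infty$, to $\sum_i\big(\int_{\mathcal M_i}\Delta\log f_\tau\,d\mu_i\big)p_i$, which is everywhere $< -\kappa$ since $\sum_i p_i\equiv 1$ (every leafwise Brownian path tends almost surely to some minimal set). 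So I would fix $T$ once and for all so large that $D^T(\Delta\log f_\tau) < -\kappa/2$ on all of $M$. This is essentially the whole conceptual content; the only reason it does not conclude the proof is that it is $\DD_{T,R,S}(\tau)$, not $\DD_T(\tau)$, that is known to be regular (\cref{lem:smooth}).

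With $T$ fixed, I would write $\log f_{\DD_{T,R,S}(\tau)} = \log f_{\DD_T(\tau)} + E_{R,S}$ with
\begin{equation*}
	E_{R,S}(x) = \Egamma{\big(\log f_\tau(\gamma(T)) - \log f_\tau(x)\big)\big(\phi_{R,S}(x,\gamma(\tfar)) - 1\big)},
\end{equation*}
whose integrand is basepoint-independent, so that $dE_{R,S}$ and $\Delta E_{R,S}$ descend to $M$. It suffices to prove $\Delta E_{R,S}\to 0$ uniformly on $M$ as $R,S\to\infty$, for then $\Delta\log f_{\DD_{T,R,S}(\tau)} < -\kappa/4 < 0$ uniformly for $R,S$ large, which is strict log superharmonicity (and \cref{lem:smooth} makes the leafwise Laplacian meaningful). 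To estimate $\Delta E_{R,S}$ I would pass to the $\R^2$-coordinatization of the proof of \cref{lem:smooth}: pull the leaves near $x$ back to a fixed copy of $\R^2$ by diffeomorphisms $\psi_x$ chosen so that $\phi_{R,S}(x,\psi_x(\,\cdot\,))$ is a fixed standard cutoff $\phi_0$; then $\phi_{R,S}(x,\gamma(\tfar))$ becomes $\min_t\phi_0(\beta(t))$, $\beta\sim W_{\R^2}$, with no base-point dependence, and the remaining base-point dependence sits only in $\log f_\tau\circ\psi_x$ and in the smooth change-of-measure factor relating $W_{\R^2}$ to leafwise Brownian motion. Differentiating twice under the expectation, $\Delta E_{R,S}(x)$ splits as the ``main'' term $\Egamma{\Delta\log f_\tau(\gamma(T))(\phi_{R,S}(x,\gamma(\tfar))-1)} - \Delta\log f_\tau(x)\,\Egamma{\phi_{R,S}(x,\gamma(\tfar))-1}$, which goes to $0$ uniformly by the tail estimate of \cref{lem:cutoff} and \cref{lem:derivatives} applied with $\Delta\log f_\tau$ in place of $\log f_\tau$, plus ``cutoff'' terms in which a derivative lands on $\phi_0$ or on the change-of-measure factor; each cutoff term carries a scalar bounded by $O(1/S)$ from the derivative bounds on $\phi_{R,S}$ times a Brownian moment controlled by exactly the Cauchy--Schwarz and It\^o isometry estimates already used for \cref{lem:smooth}. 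All of these vanish as $R,S\to\infty$, uniformly in $x$, which finishes the proof.

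The main obstacle is this last step --- legitimately interchanging the leafwise Laplacian with the Wiener expectation when $x$ is simultaneously the argument of $\log f_\tau$, the base point of $W_x$, and an argument of the cutoff $\phi_{R,S}(x,-)$, and then bounding every resulting term uniformly in $x\in M$. There is no new idea beyond the $\R^2$-coordinatization and the moment bounds already in place for \cref{lem:smooth}, \cref{lem:cutoff}, and \cref{lem:derivatives}, but this bookkeeping is the bulk of the argument. By contrast, the sign is immediate once one has \cref{lem:lapint} together with the convergence of diffusions.
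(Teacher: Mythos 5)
Your argument is correct and takes essentially the same route as the paper: fix $T$ via \cref{lem:lapint} and the Diffusion part of \cref{thm:structure} so that $D^T(\Delta\log f_\tau)<-\kappa/2$ everywhere, then use the commutation of the leafwise Laplacian with the Wiener expectation together with \cref{lem:cutoff} and \cref{lem:derivatives} to show the cutoff's contribution to the Laplacian vanishes uniformly as $R,S\to\infty$. Your explicit error term $E_{R,S}$ and the $\R^2$-coordinatization bookkeeping are just a more detailed rendering of the paper's chain of equalities.
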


		\begin{proof}
			The function $\Delta \log(f_\tau)$ is continuous on $M$. By \cref{thm:structure}, as $T\to \infty$, $$\Egamma{\Delta \log(f_\tau)(\gamma(T))}$$ uniformly tends to a linear combination $$\sum_i p_i \int_{\mathcal M_i} \Delta \log f_\tau\, d\mu_i$$ where $\mu_i$ is the probability measure on the $i$\textsuperscript{th} minimal set, $\mathcal M_i$, and $p_i$ are continuous, leafwise harmonic functions on $M$. Choose $\kappa$ satisfying \cref{eqn:lapint} as guaranteed by \cref{lem:lapint}. For large enough $T$, we can guarantee that for any point $x\in M$, 
			\begin{equation}\label{eqn:kappa}
			\Egamma{\Delta \log(f_\tau)(\gamma(T))}<-\frac \kappa 2.
			\end{equation}

			Now we are ready to estimate $\Delta\log f_{\DD_{T,R,S}(\tau)}$.
			\begin{align}
				\lim_{R,S\to \infty} \Delta \log f_{\DD_{T,R,S}(\tau)} &= \lim_{R,S\to \infty}\Delta \Egamma{ \log (f_\tau(\gamma(T)) \phi_{R,S}(x,\gamma(\tfar)))}\\
				&= \lim_{R,S\to \infty}\Delta \Egamma{ \log (f_\tau(\gamma(T)) \phi_{R,S}(x,\gamma(T)))}\\
				\intertext{\rightline{by \cref {lem:cutoff}\qquad}}
				&= \lim_{R,S\to \infty}\Egamma{\Delta (\log (f_\tau(-)) \phi_{R,S}(x,-))(\gamma(T))}\label{eqn:xxx}\\
				\intertext{\rightline{by \cref{eqn:lap}\qquad}}
				&= \Egamma{\Delta(\log f_\tau)(\gamma(T))}\\
				\intertext{\rightline{by \cref{lem:derivatives}\qquad}}
				&< -\frac \kappa 2
				\intertext{\rightline{by \cref{eqn:kappa}\qquad}}
			\end{align}
			\plabel{fix:18.1}{In \cref{eqn:xxx}, the use of \cref{eqn:lap} to commute $\Delta$ with leafwise diffusion was valid because $f_\tau$ and $\phi$ are leafwise $C^\infty$ functions; see also \cref{rmk:fsmooth}.} \cref{lem:cutoff} and \cref{lem:derivatives} further say that the limits above converge uniformly over $x\in M$. Therefore, for large enough $R$ and $S$, $\log f_{\DD_{T,R,S}(\tau)}$ is a strictly superharmonic function on each leaf $\ucover \lambda$ of  $\ucover \F$.
		\end{proof}

	\end{subsection}

	\begin{proof}[Proof of \cref{thm:main}]
		Suppose that $\F$ supports no invariant transverse measure. Then by \cref{prop:superharmonic}, we can find a log superharmonic transverse measure $\tau$. Let $\beta$ be the section of $T^*\F$ defined by $\beta = \star_2 d\log f$, where $\star_2$ is the Hodge star operator on a leaf. The fact that $\log f$ is strictly superharmonic can be written as
		\begin{align*}
		\star_2 d\star_2 d \log f > 0
		\end{align*}
		Therefore $d\beta > 0$ on $T\F$. Furthermore, $\beta \geq 0$ on $\ker(d\log f_\tau)$.

		Choose a vector field $v$ transverse to $\F$ with $\tau(v)=1$, and extend $\beta$ to a 1-form on $M$ by choosing $\beta(v)=0$. Let $\sigma$ be the unit norm 2-vector tangent to $T\F$. Now consider the 1-form $\alpha=\tau + \eps \beta$.
		\begin{align*}
			\alpha\wedge d\alpha &=(\tau + \eps \beta)\wedge d(\tau + \eps \beta)\\
			&= \eps (\tau \wedge d\beta + \beta \wedge d\tau) + O(\eps^2)
		\end{align*}
		Here, $\tau \wedge d\tau$ vanishes because $\tau$ defines a foliation. Evaluating both sides on $v\otimes \sigma$, we get
		\begin{align*}
			(\alpha \wedge d\alpha)(v\otimes \sigma) &= \eps \tau(v)d\beta(\sigma) - \eps(\beta \wedge (\iota_vd\tau))(\sigma) +O(\eps^2)\\
			&= \eps \tau(v)d\beta(\sigma) + \eps(\beta \wedge d\log f_{\tau})(\sigma) +O(\eps^2)
		\end{align*}
		\plabel{fix:18.2}{In the last line, we made use of \cref{prop:iotav}.} By construction, $d\beta(\sigma)>0$ and $\beta \wedge d\log f_{\tau} \geq 0$. Therefore, for small enough $\eps$, the right side is positive everywhere in $M$. So $\alpha$ is a contact perturbation of $\F$.

		Moreover, we have
		\begin{align*}
			d\alpha (\sigma) &= \eps d\beta (\sigma)\\
			&>0
		\end{align*}
		So the Reeb flow of $\alpha$ is transverse to $\F$. Note that $\alpha$ is only $C^1$; if preferred, we can approximate $\alpha$ with a $C^{\infty}$ 1-form having the same properties.
	\end{proof}
	\begin{rmk}
		\plabel{fix:19.1}{We have actually shown that with the hypotheses of \cref{thm:main}, there are 1-forms $\tau,\beta$ such that $\ker \tau=T\F$ and $\tau+\eps \beta$ is a contact structure for all sufficiently small $\epsilon$. This is a linear perturbation in the sense of \cite[Chapter 2]{eliashberg_confoliations_1998}. This gives another proof of \cite[Theorem 2.1.2]{eliashberg_confoliations_1998}.}
	\end{rmk}

\end{section}

\begin{section}{Obstruction 2-currents and Farkas' lemma}\label{sec:farkas}
	In the proof of \cref{thm:main}, we saw that the desired perturbing 1-form $\beta \in \Gamma(T^*\F)$ need only satisfy $d\beta>0$ on $T\F$ and $\beta\geq 0$ on the level sets of $f_\tau$. When $f_\tau$ is log superharmonic, the 1-form $\beta=\star_2 d\log f$ immediately satisfies these conditions. In this section, we give a more flexible criterion for the existence of such a $\beta$ which depends only on the topology of the level sets of $f_\tau$. It has the advantage that for transverse measures one sees in the wild, one can often directly verify the condition and avoid using logarithmic diffusion. \plabel{fix:19.2.1}{The main observation leading to the criterion is that the constraints on $\beta$ are linear, and so can be analyzed via linear programming duality. This idea goes back at least to \cite{sullivan_cycles_1976} where several similar alternatives are proven: either a solution to a system of linear inequalities on $i$-forms exists, or there is an $i$-current furnishing an obstruction. See for example Sullivan's Theorem II.1 and Theorem II.2.}

 	In this section, it will be more convenient to work with reflexive Banach spaces. In \cref{sec:currents}, we introduced the $i$-forms of Sobolev adjusted regularity $(k,l)$. In this section, we fix some large $K>2$. Our $i$-forms will be those of Sobolev adjusted regularity $(K,2)$, and our currents will be elements of the corresponding dual spaces.
	
		\plabel{fix:19.2.2}{Let $\tau$ be an arbitrary non-vanishing $C^{\infty, 1}$ transverse measure. Such a form has adjusted regularity $(\infty,2)$; see \cref{sec:currents}.} Orient the level sets of $f_\tau$ so as to agree with the orientation induced as the boundary of a superlevel set. An \emph{obstruction 2-current for $\tau$} is a 2-current in $M$ which is carried by and co-oriented with $\F$, and whose boundary consists of level sets for $f_{\tau}$ with the negative orientation. For example, a sublevel set near a local minimum for $f_{\tau}$ is an obstruction 2-current. While an arbitrary sublevel set of $f_\tau$ in a leaf $\ucover \lambda\subset \ucover \F$ might appear to be an obstruction 2-current, it typically does not project to a 2-current in $M$ with finite mass.

		\begin{prop}\label{prop:obstruction}
			Either there exists a section $\beta$ of $T^*\F$ with $d\beta > 0$ on $T\F$ and $\beta \geq 0$ on level sets of $f_{\tau}$, or there exists an obstruction 2-current for $\tau$.
		\end{prop}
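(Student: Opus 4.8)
The plan is to recognize \cref{prop:obstruction} as a theorem of the alternative in the style of Gordan and Stiemke, and to prove it by a Hahn--Banach separation argument: the obstruction $2$-current will be exactly the separating functional, read back through Stokes' theorem.

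First I would fix function spaces. Let $Y=\Gamma(T^*\F)$ (with, say, $C^1$ regularity) and let $\Lambda$ be the (singular) codimension-$2$ lamination of $M$ by the level sets of $f_\tau$, oriented as boundaries of superlevel sets. Consider the linear map
\begin{equation*}
	\Psi\colon Y\longrightarrow E:=C^0(M;\Lambda^2T^*\F)\oplus C(\Lambda),\qquad \Psi(\beta)=\paren{d\beta,\ \beta|_\Lambda},
\end{equation*}
where $d$ is the leafwise exterior derivative and $\beta|_\Lambda$ is the restriction of $\beta$ to the oriented tangent lines of the level sets. Give $E$ its sup-norm Banach structure and let $U=\set{(\omega,g)\in E:\omega>0,\ g\ge 0}$, a convex set whose interior $\set{\omega>0,\ g>0}$ is nonempty. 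Using compactness of $M$ to replace $d\beta>0$ by $d\beta\ge\eps\cdot(\text{leafwise area})$, the first alternative holds precisely when $\Psi(Y)\cap\operatorname{int}(U)\neq\emptyset$.

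Suppose the first alternative fails, so the linear subspace $\Psi(Y)$ misses the open convex set $\operatorname{int}(U)$. Geometric Hahn--Banach yields a nonzero continuous $\Phi\in E^*$ with $\Phi\equiv 0$ on $\Psi(Y)$ and $\Phi\ge 0$ on $U$, hence on $\overline U\supseteq\set{\omega\ge0,\ g\ge0}$. Write $\Phi=(\mu,\nu)$. Positivity makes $\mu$ a positive functional on $C^0$ sections of the oriented line bundle $\Lambda^2T^*\F$, i.e.\ (Riesz) a positive $2$-current in $M$ tangent to and co-oriented with $\F$; and $\nu$ a positive transverse measure for $\Lambda$, giving a positive $1$-current $T_\nu$ carried by the level sets of $f_\tau$ with their chosen orientation. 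The relation $\Phi|_{\Psi(Y)}=0$ reads $\mu(d\beta)+\nu(\beta|_\Lambda)=0$ for all $\beta$. Since $\mu$ is $\F$-tangent it kills $d$ of any transverse $1$-form (using $\tau\wedge d\tau=0$), so $\mu(d\beta)=(\partial\mu)(\beta)$ depends only on $\beta\in\Gamma(T^*\F)$; and $\nu(\beta|_\Lambda)=T_\nu(\beta)$. Because an $\F$-tangent $1$-current is determined by its pairing with $\Gamma(T^*\F)$, we get $\partial\mu=-T_\nu$, which exhibits $\partial\mu$ as a nonpositive multiple of the (positively oriented) level sets---that is, level sets with the negative orientation. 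Thus $\mu$ is an obstruction $2$-current, and $\mu\neq 0$, since $\mu=0$ would force $T_\nu=0$, hence $\nu=0$, hence $\Phi=0$. (A one-line Stokes computation on any obstruction current shows the two alternatives are in fact mutually exclusive.)

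I expect the main obstacle to be the functional-analytic bookkeeping around the lamination $\Lambda$, which is genuinely singular: $f_\tau$ is only $C^{\infty,1}$, and where $d\log f_\tau$ vanishes the level sets degenerate to points, figure-eights, or locally constant regions. One must pin down $C(\Lambda)$ and the notion of transverse measure for $\Lambda$ so that (i) $\beta|_\Lambda$ is a bona fide element of $C(\Lambda)$; (ii) positive functionals on $C(\Lambda)$ are exactly positive transverse measures carried by genuine regular level sets, with no spurious mass surviving on the leafwise critical locus---in particular near a leafwise local maximum of $f_\tau$, where the associated $1$-current degenerates to zero and so must not obstruct; and (iii) the current $T_\nu$ really is supported on level sets with the asserted co-orientation, so that $\mu$ literally meets the stated definition of an obstruction $2$-current. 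One also needs Stokes' theorem valid at $C^1$ regularity and the elementary fact that an $\F$-tangent $1$-current is determined by its pairing with $\Gamma(T^*\F)$. Once $\Lambda$ and its transverse measures are correctly set up, the rest is the routine separation argument above.
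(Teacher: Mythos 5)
Your overall strategy---set up the constraints as a linear map into a function space, separate the image from a positive cone by Hahn--Banach, and read the separating functional back as a $2$-current via Stokes---is the same duality argument the paper runs through its Farkas lemma. But there is a genuine gap at the step where you pass from the separating functional $\Phi=(\mu,\nu)$ to an obstruction $2$-current, and it is not the ``functional-analytic bookkeeping'' you defer; it is the central difficulty the paper's version of Farkas' lemma is built to handle. At a leafwise critical point $p$ of $f_\tau$ there is a tangent vector $v$ such that both $v$ and $-v$ arise as limits of oriented tangents to level sets. Consequently the constraint ``$\beta\geq 0$ on level sets,'' taken on the closed cone these tangents generate, forces $\beta(v)=0$ at $p$ and can never be made strict there. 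Two things follow. First, $\Psi(Y)\cap\operatorname{int}(U)=\emptyset$ whenever such critical points exist---which is the typical situation---so your separation hypothesis holds even when the first alternative is true, and your ``precisely when'' fails in the direction you would need to recover alternative one. Second, the functional $\Phi=(0,\nu)$ with $\nu=\delta_{(p,v)}+\delta_{(p,-v)}$ is nonzero, is nonnegative on $U$, and annihilates $\Psi(Y)$ (since $\beta(v)+\beta(-v)=0$); Hahn--Banach is free to return it. For this $\Phi$ your implication ``$\mu=0\Rightarrow T_\nu=0\Rightarrow\nu=0$'' breaks at the second arrow, because the restriction map $\beta\mapsto\beta|_\Lambda$ is not dense in any reasonable $C(\Lambda)$ near the singular locus: its image consists of functions odd under $v\leftrightarrow -v$. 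Since the two alternatives are mutually exclusive (your Stokes remark), an argument that produced an obstruction current whenever $\Psi(Y)$ misses $\operatorname{int}(U)$ would be contradictory, so the gap cannot be closed by redefining $C(\Lambda)$.

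The paper resolves this by proving a hybrid alternative theorem in which the cone $C_1$ of level-set $1$-currents (which is \emph{not} proper, precisely because of the critical points above) carries only the non-strict inequality, while strictness is demanded only on the proper cone $C_2$ of positive $2$-currents. The work is in showing that the separating hyperplane can be tilted---via a Hahn--Banach-plus-Zorn argument---so that it meets the image cone only in its lineality part $A=B(C)\cap(-B(C))$; the second alternative is then triggered only if a genuine nonzero positive $2$-current maps into $A$, which is exactly what rules out the spurious functionals supported on the critical locus. To repair your proof you would need an analogous refinement: either quotient out the degenerate directions and show the separation can be chosen strictly positive on every nonzero positive $2$-current, or import a Gordan/Stiemke-type statement that already incorporates the mixed strict/non-strict structure. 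A single application of geometric Hahn--Banach does not do this.
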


		Before giving the proof of \cref{prop:obstruction}, we will recall some background from convex optimization. A closed, convex cone in a Banach space is \emph{proper} if it does not contain a line through the origin. We will need a hybrid version of Farkas' lemma with allowances for some strict and some non-strict inequalities. We include a proof because we couldn't find the form we require in the literature.

		\begin{lem}[Farkas' lemma]
			Let $X$, $Y_1$, $Y_2$ be Banach spaces with $X$ reflexive and separable. Let $B\colon Y_1\oplus Y_2 \to X^*$ be a continuous linear map. Let $\inner{\cdot}{\cdot}:(Y_1\oplus Y_2)\otimes X \to \R $ denote the induced pairing. Let $C_i$ be a closed, convex cone in $Y_i$. Suppose further that $C_2$ is proper and that the images $B(C_1)$ and $B(C_2)$ are closed. Then exactly one of the following alternatives holds:
			\begin{enumerate}
				\item There exists $x\in X$ satisfying $\inner{x}{C_1} \geq 0$ and $\inner{x}{C_2 \setminus \set 0} > 0$.
				\item There exists $y_1\in C_1$ and $y_2\in C_2\setminus \set 0$ satisfying $\inner{X}{(y_1,y_2)}=0$.
			\end{enumerate}
.		\end{lem}
		\begin{proof}
			It is clear that the alternatives are mutually exclusive, so we need only prove that at least one of the alternatives holds. We will use the shorthand $B(C)=B(C_1)+B(C_2)$. 

			Suppose that $B(C)=X^*$. Choose any nonzero $(a_1, a_2)\in (C_1,C_2)$. Then choose $(b_1,b_2)\in (C_1,C_2)$ satisfying $B((b_1,b_2))=-B((a_1,a_2))$. The element $(y_1,y_2)=(a_1+b_1, a_2+b_2)$ satisfies alternative two. The fact that $y_2\neq 0$ follows from properness of $C_2$.

			Suppose instead that $B(C)$ does not contain some point $v \in X^*$. By the Hahn--Banach hyperplane separation theorem, there exists a linear functional $x\in X^{**}$ separating $v$ from $B(C)$, in the sense that $x(v)<0$ and $x(B^*(C))\geq 0$. Since $X$ is reflexive, $x$ may be realized as a point in $X$.

			Let $H_x$ be the hyperplane in $X^*$ defined by $x$. Let $A = B(C) \cap -B(C)$. Points in $A$ are ``corners'' of $B(C)$. Observe that $A\subset H_x$. We will now show that it can be further arranged that $H_x \cap B(C)=A$. 

			If $w\in (H_x \cap B(C)) \setminus A$, then another application of the Hahn--Banach theorem gives a linear functional $x'\in X^{**}$ separating $-w$ from $B(C)$. Now $H_{x+x'}\cap B(C)$ is strictly contained in $H_x \cap B(C)$, and does not include $w$. In effect, we have tilted $H_x$ away from $w$. So we have a strategy for removing a single unwanted point from $H_x\cap B(C)$. We will now use Zorn's lemma to show that we can remove all the unwanted points.

			Let $\mathcal S=\set{H_x \cap B(C)}$, where $x$ varies over linear functionals supporting $B(C)$. The elements of $\mathcal S$ are all closed subsets of $B(C)$. The family $\mathcal S$ is partially ordered by inclusion. Since $X$ is separable, any chain in $\mathcal S$ may be refined to a countable chain. Given a countable chain $\set{H_{x_i}\cap B(C)}_{i\in \N}$, let $$\overline x=\sum_i \frac 1 {2^i\norm{x_i}} x_i.$$ With this choice,
			\begin{equation*}
				H_{\overline x}\cap B(C) = \bigcap_{i\in \N} H_{x_i}\cap B(C).
			\end{equation*}
			Therefore, every chain in $\mathcal S$ has a lower bound in $\mathcal S$. By Zorn's lemma, $\mathcal S$ has a minimal element. By the discussion in the previous paragraph, such a minimal element must be equal to $A$.

			Now we have a nonzero candidate $x\in X$ solving the non-strict versions of the inequalities in alternative one. This choice of $x$ satisfies $\inner{x}{(y_1,y_2)}=0$ if and only if $B((y_1,y_2))\in A$. If there exists $a_2\in C_2\setminus \set{0}$ with $B((0,a_2))\in A$, then we may find $(b_1,b_2)\in (C_1,C_2)$ satisfying $B((b_1,b_2))=-B((0,a_2))$. As before, properness of $C_2$ guarantees that $a_2+b_2\neq 0$. Thus, the element $(b_1, a_2+b_2)$ satisfies alternative two. Otherwise, alternative one holds.

		\end{proof}

		\begin{proof}[Proof of \cref{prop:obstruction}]
			Let $X$ be the Sobolev space of sections of $T^*\F$ with Sobolev adjusted regularity $(K,2)$. Since $X$ is a separable Hilbert space, Farkas' lemma will apply. For $i\in \set{1,\,2}$, let $Y_i$ be the space of $i$-currents carried by $\F$. Let $C_1\subset Y_1$ be the closed cone generated by 1-currents contained in and co-oriented with the level sets of $f_{\tau}$. Let $Y_2$ be the space of 2-currents tangent to $\F$ and let $C_2\subset Y_2$ be the closed cone generated by 2-currents co-oriented with $\F$. 

			Although $C_1$ is not proper, $C_2$ is proper. The lack of properness for $C_1$ occurs near critical points of $f_\tau$. At such points, there is a tangent vector $v$ such that both $-v$ and $v$ are limits of oriented subarcs of the level sets of $\tau$. Therefore, $v$ and $-v$ are both elements of $C_1$. On the other hand, if $y\in C_2\setminus \set{0}$, then $-y$ is a 2-current negatively tangent to $\F$ and does not lie in $C_2$.

			Define the pairing $\inner{\cdot}{\cdot}\colon X\otimes (Y_2 \oplus Y_2)\to \R$ by
			\begin{equation*}
				\inner{\beta}{(y_1,y_2)}=\int_{y_2} d\beta + \int_{y_1}\beta. 
			\end{equation*}

			By Stokes' theorem, an element $(y_1,y_2) \in Y_1 \oplus Y_2$ satisfies $\inner{X}{(y_1,y_2)}=0$ if and only if $y_1=-\partial y_2$. So by Farkas' lemma, either there exists a section of $T^*\F$ satisfying the desired properties, or there exists a non-zero 2-current $y_2$ positively tangent to $\F$ such that $\partial y_2$ is negatively tangent to the level sets of $f_{\tau}$. This is exactly an obstruction 2-current.
		\end{proof}

		\begin{prop}\label{prop:obstruction2}
			If $\log f_\tau$ is a strictly superharmonic function on each leaf, then there are no obstruction 2-currents for $\tau$.
		\end{prop}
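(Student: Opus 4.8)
The plan is to run the Stokes-theorem argument from the introduction (the one that rules out compact leaves) against the specific $1$-form $\beta = \star_2 d\log f_\tau$ that already appears in the proof of \cref{thm:main}. Suppose, toward a contradiction, that $\Sigma$ is a nonzero obstruction $2$-current for $\tau$. First I would recall the two pointwise facts about $\beta$ established in the proof of \cref{thm:main}: strict leafwise superharmonicity of $\log f_\tau$ is exactly the statement $\star_2 d\star_2 d\log f_\tau > 0$, so $d\beta$ restricts to a strictly positive leafwise $2$-form on $M$; and $\beta \geq 0$ on $\ker(d\log f_\tau)$, i.e. $\beta$ is non-negative along the level sets of $f_\tau$ equipped with the orientation induced from a superlevel set. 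Both $d\beta|_{T\F}$ and $\beta|_{T\F}$ are defined and continuous on $M$ — the former because it agrees, up to sign, with the leafwise Laplacian $\Delta\log f_\tau$, which descends to a continuous function on $M$ — and both $\Sigma$ and $\partial\Sigma$ are tangent to $\F$ (the latter by the very definition of an obstruction current), so the pairings below are legitimate, and finite mass of $\Sigma$ lets us integrate the merely continuous form $d\beta$ over it.

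Next I would apply Stokes' theorem for currents,
\begin{equation*}
\int_\Sigma d\beta = \int_{\partial\Sigma}\beta,
\end{equation*}
and estimate the two sides in opposite directions. On the left, write $d\beta|_{T\F}=c\,\omega_\F$ with $\omega_\F$ the positively oriented leafwise area form and $c$ a continuous, everywhere-positive function on $M$, hence bounded below by some $c_0>0$ by compactness; since $\Sigma$ is co-oriented with $\F$, $\int_\Sigma\omega_\F$ is the mass of $\Sigma$, which is positive as $\Sigma\neq 0$, so $\int_\Sigma d\beta > 0$. On the right, $\partial\Sigma$ is by hypothesis a non-positive combination of level sets of $f_\tau$ carrying their superlevel-set orientation, along which $\beta\geq 0$; hence $\int_{\partial\Sigma}\beta\leq 0$. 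These two bounds contradict the displayed equality, so no nonzero obstruction $2$-current exists.

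I expect the only delicate point to be the strict positivity $\int_\Sigma d\beta > 0$: this rests on the fact that a nonzero current tangent to and co-oriented with $\F$ pairs strictly positively with an everywhere-positive leafwise area form, which in turn uses the structure of such currents (integration of the leafwise area form against a positive transverse measure) together with compactness of $M$ to get the uniform lower bound $c_0$ on $d\beta|_{T\F}/\omega_\F$. Everything else is orientation bookkeeping — in particular verifying that ``level sets with the negative orientation'' means $\partial\Sigma=-L$ with $L$ a positively (superlevel) oriented level set; this is consistent with the stated example that a sublevel set near a local minimum of $f_\tau$ is an obstruction current, and with the fact that a strictly superharmonic function has no local minima, so that no such sublevel sets arise in the first place.
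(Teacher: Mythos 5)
Your proof is essentially the paper's: both pair $\beta=\star_2 d\log f_\tau$ with the obstruction current via Stokes' theorem, using strict leafwise superharmonicity to force a strict sign on $\int_\Sigma d\beta$ and the sign of $\beta$ along oriented level sets to force the opposite (non-strict) sign on $\int_{\partial\Sigma}\beta$. Your extra care about why $\int_\Sigma d\beta$ is strictly nonzero (mass of a nonzero positively co-oriented current against a uniformly positive leafwise form) is a reasonable elaboration of the same argument, and your sign conventions simply follow the proof of \cref{thm:main} rather than the (internally opposite) convention used in the paper's own proof of this proposition.
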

		\begin{proof}
			This is a generalization of the fact that superharmonic functions have no local minima. Suppose that $S$ is an obstruction 2-current for $\tau$. It is helpful to keep in mind the simplest case of a compact surface $S$ in a leaf $\lambda$ with boundary a negatively oriented union of level sets of $f_{\tau,\lambda}$.

			Strict superharmonicity of $\log f_\tau$ is equivalent to $$\star_2 d \star_2 d\log(f_\tau) < 0,$$ where $\star_2$ is the Hodge star operator on a leaf. So by Stokes' theorem,
			\begin{align*}
				0 &> \int_S d \star_2 d\log f_\tau \\
				&= \int_{\partial S} \star_2 d\log f_\tau\\
				& \geq 0
			\end{align*}
			The last inequality uses the property of obstruction 2-currents that $\partial S$ is a subcurrent of the level sets of $f_\tau$, which are in turn equal to the level sets of $\log f_\tau$. Contradiction.
		\end{proof}

		\begin{ex}[\cref{ex:3} revisited]\label{ex:3revisited}
			Adopt the notation from \cref{ex:3}. Let $T$ be the torus in $M$ along which one can cut to obtain $M^\circ$. Let $\tau=fd\theta$. Let us show that $\tau$ has no obstruction 2-current. Call the purported obstruction 2-current $\Sigma$. It is possible to modify $\Sigma$ so that its boundary lies on $T$. Then $\Sigma$ is a positive combination of horizontal pairs of pants. A pair of pants has two components which are positively oriented level sets of $f$ and one that is a negatively oriented level set of $f$. Therefore, the positively oriented boundary of $\Sigma$ has twice the length of the negatively oriented boundary. Even with cancellation, the positive boundary cannot be empty. Therefore $\Sigma$ cannot be an obstruction 2-current. By \cref{prop:obstruction}, $fd\theta$ may be perturbed to a contact structure with Reeb flow transverse to $\F$.
		\end{ex}

\end{section}

\begin{section}{Questions}
	\begin{itemize}[wide]
		\item Can the results of the present paper be extended to $C^{\infty,1}$ or even to $C^{\infty,0}$ foliations? Many powerful constructions of foliations proceed by iteratively splitting a branched surface~\cite{li_laminar_2002}. The resulting foliation is typically only $C^{\infty,0}$. Kazez--Roberts and Bowden independently showed that the Eliashberg--Thurston theorem does extend to $C^{\infty,0}$ foliations~\cite{kazez_c0_2017,bowden_approximating_2016}. The difficulty in extending our approach is that the direction of expanding holonomy is no longer well defined for $C^{\infty,0}$ foliations. One possible line of attack would be to use the work of Ishii et al. which shows that branched surfaces satisfying a certain handedness condition admit transverse Reeb flows~\cite{ishii_positive_2020}.

		\item What can be said in higher dimensions? One would like to generalize the Eliashberg--Thurston theorem to codimension 1 leafwise symplectic foliations in arbitrary dimension. \cref{prop:superharmonic} and its dependencies all work in arbitrary dimension, giving a log superharmonic transverse measure in the absence of an invariant transverse measure. When can it be upgraded to a log plurisuperharmonic transverse measure?

		\item Is a Reeb flow $R$ transverse to a foliation $\F$ product covered, i.e. conjugate to the standard flow $\frac{\partial}{\partial z}$ on $\ucover M \cong \R^3$? This is equivalent to the statement that $\ucover M/ R$ is topologically an open disk. Since $R$ is transverse to a taut foliation, $\ucover M/R$ is a (possibly non-Hausdorff) 2-manifold. The fact that the flow of $R$ preserves contact planes prohibits certain types of non-Hausdorff behaviour in $\ucover M/R$. In particular, a smooth 1-parameter family of flow lines cannot break into two different families. However, as pointed out to the author by Fenley, there could conceivably still be a sequence of flow lines with more than one limiting flow line.
	\end{itemize}
\end{section}

\printbibliography
\end{document}